\documentclass{article}
\usepackage{amsmath, amssymb, amsfonts, amsthm, mathrsfs, bbold, indentfirst}
\usepackage{xcolor}
\usepackage[normalem]{ulem} 
\usepackage{soul} 
\usepackage[breakable, theorems, skins]{tcolorbox}

\newtheorem{definition}{\bf Definition}[section] 
\newtheorem{theorem}{\bf Theorem}[section] 
\newtheorem{corollary}{\bf Corollary}[section] 
\newtheorem{lemma}{\bf Lemma}[section] 
\newtheorem{proposition}{\bf Proposition}[section] 
\newtheorem{remark}{\bf Remark}[section]

\pretolerance=10000

\begin{document}
	
	\title{Asymptotic Equivalence and Decay Characterization in Second-Grade Fluid Equations}
	\author{Felipe W. Cruz\footnotemark[1] \,\footnotemark[2], \,
		C\'esar J. Niche\footnotemark[3], \,
        Cilon F. Perusato\footnotemark[4] \,\footnotemark[5], \,\\
		Marko A. Rojas-Medar\footnotemark[6]}
	\date{}
	\maketitle
	
	\renewcommand{\thefootnote}{\fnsymbol{footnote}}
	
	\footnotetext[1]{Departamento de Ecuaciones Diferenciales y An\'alisis Num\'erico, 
		Facultad de Matem\'aticas, 
		Universidad de Sevilla, 
		41012 Seville, Spain.}
    \footnotetext[2]{Departamento de Matem\'atica, 
		Centro de Ci\^encias Exatas e da Natureza, 
		Universidade Federal de Pernambuco, 
		Recife - PE, CEP 50740-560, Brazil ({\tt felipe.wcruz@ufpe.br}).}
	\footnotetext[3]{Departamento de Matem\'atica Aplicada, Instituto de Matem\'atica, Universidade Federal do Rio de Janeiro, Rio de Janeiro - RJ, CEP 21941-909, Brazil ({\tt cniche@im.ufrj.br}).}
   \footnotetext[4]{Institut Camille Jordan, 
		Universit\'e Lyon 1, 
		Villeurbanne, France ({\tt perusato@math.univ-lyon1.fr}).}
	\footnotetext[5]{Departamento de Matem\'atica, 
		Centro de Ci\^encias Exatas e da Natureza, 
		Universidade Federal de Pernambuco, 
		Recife - PE, CEP 50740-560, Brazil ({\tt cilon.perusato@ufpe.br}).}
    \footnotetext[6]{Departamento de Matem\'atica, Facultad de Ciencias, Universidad de Tarapac\'a, 1010069 Arica, Arica y Parinacota, Chile ({\tt mmedar@academicos.uta.cl}).}

	\renewcommand{\thefootnote}{\arabic{footnote}}
	
	\maketitle
	
	\begin{abstract}
		We establish new asymptotic results for the solutions of the second-grade fluids equations and characterize their decay rate in terms of the behavior of the initial data. Moreover, assuming more regularity for the initial data, we study the large-time behavior of these solutions by comparing them to the solutions of the linearized equations. As a consequence, we obtain lower bounds for the solutions. Other auxiliary results of interest are discussed.
	\end{abstract}

\section{Introduction}
Second-grade fluids are non-Newtonian fluids whose stress tensor is 

\begin{equation}
\label{eqn:tensor}
\sigma = -p \, Id + 2 \nu \, A_1 + \alpha_1 A_2 + \alpha_2 A_1 ^2,    
\end{equation}
where $A_1$ and $ A_2$  are the Rivlin - Ericksen  \cite{Rivlin-Ericksen-1955} tensors given by

\begin{equation}
\label{eqn:rivlin}
A_1 (u) = \frac{1}{2} \left(\nabla u + \nabla u ^t \right), \quad A_2 (u) = \partial_t A_1 + (u \cdot \nabla) A_1 + (\nabla u ) ^t A_1 + A_1 (\nabla u).
\end{equation}
Here, \(\mathbf{u}\) and \(p\) denote the velocity of the fluid and its pressure, respectively, defined on $\mathbb{R}^3$.

Some properties of solutions to systems of equations arising from \eqref{eqn:tensor} depend on the relation between the coefficients $\alpha_1$ and $\alpha_2$ in \eqref{eqn:rivlin}, see Dunn and Fosdick \cite{Dunn}, Fosdick and Ragajopal \cite{Fosdick, Rajagopal}. In particular, the choice $\alpha_1 + \alpha_2 = 0, \nu > 0$  leads to the thermodynamically consistent  equations

\begin{equation}\label{eqn:1}
		\begin{cases}
    \partial_t (\mathbf{u} - \alpha \Delta \mathbf{u}) - \mu \Delta \mathbf{u} + \operatorname{curl}(\mathbf{u} - \alpha \Delta \mathbf{u}) \times \mathbf{u} + \nabla p = {\boldsymbol 0}, \\ \noalign{\smallskip}
    \operatorname{div} \mathbf{u} = 0, \\ \noalign{\smallskip}
    \mathbf{u}(0) = \mathbf{u}_0,
\end{cases} 
	\end{equation}
where $\alpha = \alpha_1$ and \(\mu > 0\) is the kinematic viscosity, see Dunn and Fosdick \cite{Dunn}. Due to its importance for modeling different non-Newtonian fluids and its relation to other well known models, different aspects concerning solutions to \eqref{eqn:1} have been studied, see Bresch and Lemoine \cite{Bresch-Lemoine-1998}, Busuioc \cite{Busuioc02,Busuioc18}, Busuioc and Ratiu \cite{Busuioc-Ratiu03}, Cioranescu and El Hac\`ene \cite{elhacene}, Cioranescu, Girault and Ragajopal \cite{book_cioranescu}, Cioranescu and Girault \cite{vivette_Doina}, Clark, Friz and Rojas-Medar \cite{Clark}, Coscia and Galdi \cite{coscia}, Friz, Guill\'en-Gonz\'alez and Rojas-Medar \cite{Friz1, Friz2}, Galdi, Grobbelaar-Van Dalsen and Sauer \cite{Dalsen1, Dalsen2}, Galdi and Sequeira \cite{Sequeira}, Girault and Scott \cite{Girault-Scott99}, Iftimie \cite{Dragos02}, Linshiz and Titi \cite{Linshiz-Titi10}, Paicu and Vicol \cite{Paicu-Vicol11},  Paicu and Raugel \cite{PaicuRaugel},  Paicu, Yu and Zhu \cite{Paicu-Yu-Zhu24,Paicu-Yu-ZhuSIMA24}, le Roux \cite{Roux} and references therein. 

The existence of the regularizing term $- \alpha \:\!\partial_{t} \Delta \mathbf{u}$ in \eqref{eqn:1} implies (formally) the energy identity
\begin{equation}\label{int_energy_equa}
\frac{d}{dt} \!\left(\bigl\|\mathbf{u}(t)\bigr\|_{L^2 (\mathbb{R}^3) ^3}^2 + \alpha \:\!\bigl\|\nabla \:\!\mathbf{u}(t)\bigr\|_{L^2 (\mathbb{R}^3) ^3}^2\right) = -2 \mu \:\!\bigl\|\nabla \:\!\mathbf{u} (t)\bigr\|_{L^2 (\mathbb{R}^3) ^3}^2,
\end{equation}
provided $\mathbf{u}$ is a sufficiently regular solution. This naturally leads to the question of what is the decay rate for the natural $H^{1}_{\alpha}(\mathbb{R}^3) ^3$ norm  

\begin{equation}
\label{eqn:norma-h1}
\bigl\|\mathbf{u}(t)\bigr\|_{H^1_\alpha (\mathbb{R}^3) ^3}^2 := \bigl\|\mathbf{u}(t)\bigr\|_{L^2(\mathbb{R}^3) ^3}^2 + \alpha \:\!\bigl\|\nabla \:\!\mathbf{u}(t)\bigr\|_{L^2(\mathbb{R}^3) ^3}^{2},
\end{equation}
which is equivalent to the usual $H^1 (\mathbb{R}^3) ^3$ norm.

Our main goal in this article is to prove decay estimates  for this norm for solutions to \eqref{eqn:1} in terms of the initial data $u_0$. To accomplish this, we associate to each $u_0 \in L^2 (\mathbb{R}^3)$ a decay character $r^{\ast} = r^{\ast} (u_0)$, which quantifies the ``order'' of $\widehat{u_0} (\xi)$ near the origin in the frequency domain, i.e.  $r^{\ast} = r^{\ast} (u_0)$ if $|\widehat{u_0} (\xi)| \approx |\xi|^{r^{\ast}}$ near $\xi = 0$. This idea has been developed by Bjorland and Schonbek \cite{Bjorland-Schonbek09}, Brandolese \cite{BrandoSIMA2016} and Niche and Schonbek \cite{cesar_london}. We adopt the definitions and conventions from Brandolese \cite{BrandoSIMA2016}, who introduced generalized decay characters $r^{\ast} _+$ and $r^{\ast} _-$ when aiming for upper and lower decay bounds respectively, and showed that when $r^{\ast} _+ = r^{\ast} _-$, they coincide with the original $r^{\ast}$, for full details see Section \ref{decay-character-section}.  

We state now our first main result. For the definition of $V_2 (\mathbb{R}^3) ^3 \subset H^3 (\mathbb{R}^3) ^3$, see Section \ref{existence-solutions}.
\newpage
\begin{theorem}\label{Main1-Intro}
		Let $\mathbf{u}_0 \in V_2 (\mathbb{R}^3) ^3$ and suppose that $r^{\ast} _+ (\mathbf{u}_0) = r^{\ast} _+$. Additionally, assume that $\Vert \mathbf{u}_0 \Vert _{V_2 (\mathbb{R}^3) ^3} < \epsilon$ for a sufficiently small $\epsilon > 0$. Then, for any weak solution 	$\mathbf{u}$  to \eqref{eqn:1}, the following estimate holds:
		\[
		\bigl\|\mathbf{u}(t)\bigr\|_{H^1_\alpha(\mathbb{R}^3) ^3}^2 \leq C\, (t + 1)^{-\min \left\{ \frac{3}{2} + r^{\ast}_+, \frac{5}{2} \right\}}, \quad \,\,\forall \; t \geq 0,
		\]
		where the constant $C>0$ depends only on $ \|\mathbf{u}_0\|_{V_2 (\mathbb{R}^3) ^3}, \alpha, r^{\ast} _+$, and $\mu$.
	\end{theorem}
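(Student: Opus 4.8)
The plan is to combine the Fourier splitting method of Schonbek with the decay-character framework, using a Duhamel (mild) representation to isolate the linear contribution—governed by $r^*_+$—from the nonlinear one, which is responsible for the saturation at $\tfrac52$. Writing the energy identity \eqref{int_energy_equa} on the Fourier side,
\[
\frac{d}{dt}\bigl\|\mathbf{u}(t)\bigr\|_{H^1_\alpha(\mathbb{R}^3)^3}^2 = -2\mu\int_{\mathbb{R}^3}|\xi|^2\,\bigl|\widehat{\mathbf{u}}(\xi,t)\bigr|^2\,d\xi,
\]
I would split the frequency domain at the ball $B(t)=\{|\xi|\le g(t)\}$ with $g(t)^2=k/(t+1)$. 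Since $x\mapsto x/(1+\alpha x)$ is increasing, on $\{|\xi|>g(t)\}$ one has $|\xi|^2\ge \tfrac{g(t)^2}{1+\alpha g(t)^2}(1+\alpha|\xi|^2)$, which converts the dissipation into a multiple of the full $H^1_\alpha$ density and yields
\[
\frac{d}{dt}\bigl\|\mathbf{u}(t)\bigr\|_{H^1_\alpha(\mathbb{R}^3)^3}^2 + \frac{C_1}{t+1}\bigl\|\mathbf{u}(t)\bigr\|_{H^1_\alpha(\mathbb{R}^3)^3}^2 \le \frac{C_1}{t+1}\int_{B(t)}(1+\alpha|\xi|^2)\,\bigl|\widehat{\mathbf{u}}(\xi,t)\bigr|^2\,d\xi,
\]
with $C_1=2\mu k$ and $k$ chosen large. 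Everything reduces to estimating the low-frequency integral on the right.

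\emph{Linear part.} Applying the Leray projection $\mathbb{P}$ and passing to Fourier variables turns \eqref{eqn:1} into the mild identity
\[
\widehat{\mathbf{u}}(\xi,t)=e^{-\frac{\mu|\xi|^2}{1+\alpha|\xi|^2}t}\widehat{\mathbf{u}}_0(\xi)-\int_0^t e^{-\frac{\mu|\xi|^2}{1+\alpha|\xi|^2}(t-s)}\,\frac{\widehat{\mathbb{P}\,N}(\xi,s)}{1+\alpha|\xi|^2}\,ds,
\]
where $N=\operatorname{curl}(\mathbf{u}-\alpha\Delta\mathbf{u})\times\mathbf{u}$. For $|\xi|\le g(t)$ the symbol $\tfrac{\mu|\xi|^2}{1+\alpha|\xi|^2}$ is comparable to $\mu|\xi|^2$, so the first term behaves like the heat flow; since $|\xi|^2 t\le k$ on $B(t)$ the exponential is bounded, and the hypothesis $r^*_+(\mathbf{u}_0)=r^*_+$ gives $\int_{|\xi|\le\rho}|\widehat{\mathbf{u}}_0|^2\,d\xi\lesssim\rho^{\,3+2r^*_+}$. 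Taking $\rho=g(t)$ contributes $(t+1)^{-(3/2+r^*_+)}$ to the low-frequency integral, which is precisely the linear decay rate.

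\emph{Nonlinear part and the cap at $5/2$.} The crucial structural observation is that $\mathbb{P}\,N$ is a divergence of quadratic terms: using $\operatorname{curl}\mathbf{u}\times\mathbf{u}=(\mathbf{u}\cdot\nabla)\mathbf{u}-\tfrac12\nabla|\mathbf{u}|^2$ together with $\operatorname{div}\mathbf{u}=0$, and an analogous manipulation for the $\alpha$-term, one writes $\mathbb{P}\,N=\mathbb{P}\operatorname{div}\mathcal{Q}(\mathbf{u})$ with $\mathcal{Q}$ quadratic. Hence $|\widehat{\mathbb{P}\,N}(\xi,s)|\lesssim|\xi|\,\|\mathcal{Q}(\mathbf{u}(s))\|_{L^1}\lesssim|\xi|\,\|\mathbf{u}(s)\|_{V_2(\mathbb{R}^3)^3}^2$; the nonlinearity carries an extra factor $|\xi|$, i.e. it has effective decay character $1$. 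Feeding a provisional bound $\|\mathbf{u}(s)\|_{H^1_\alpha(\mathbb{R}^3)^3}^2\lesssim(s+1)^{-\beta}$ into this estimate, splitting the Duhamel integral at $s=t/2$, and integrating over $B(t)$, the $|\xi|$ factor produces (after $\rho=|\xi|\sqrt{t}$) a contribution of order $(t+1)^{-5/2}$, independently of how large $r^*_+$ is. This is the origin of the minimum with $\tfrac52$.

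Collecting the two parts, the low-frequency integral is bounded by $C\,(t+1)^{-\min\{3/2+r^*_+,\,5/2\}}$, so the differential inequality reads $\tfrac{d}{dt}\|\mathbf{u}\|_{H^1_\alpha}^2+\tfrac{C_1}{t+1}\|\mathbf{u}\|_{H^1_\alpha}^2\le C(t+1)^{-1-\min\{3/2+r^*_+,\,5/2\}}$; multiplying by the integrating factor $(t+1)^{C_1}$ (with $C_1$ chosen larger than the target exponent) and integrating yields the claim, the faster term $(t+1)^{-C_1}$ being absorbed, and the provisional bound on $\mathbf{u}(s)$ is recovered a posteriori, closing the bootstrap. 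The main obstacle is the nonlinear step: one must first secure global-in-time $V_2$ regularity and a crude preliminary decay rate—both requiring the smallness $\|\mathbf{u}_0\|_{V_2(\mathbb{R}^3)^3}<\epsilon$ to absorb the quadratic nonlinearity and keep it dominated by the dissipation—and then verify that the iteration saturates exactly at $\tfrac52$ rather than stalling at a slower rate. Estimating $\|\mathcal{Q}(\mathbf{u}(s))\|_{L^1}$ by the $V_2$ norm is where the $H^3$ regularity built into $V_2$ is genuinely used, since $N$ involves $\Delta\mathbf{u}$, and carefully matching the time exponents across the split of the Duhamel integral is the delicate point.
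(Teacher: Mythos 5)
Your overall strategy --- Fourier splitting on a ball of radius $\rho(t)\sim (t+1)^{-1/2}$, a pointwise Duhamel estimate in frequency separating the linear contribution (rate $3/2+r^*_+$ from the decay character) from the nonlinear one (capped at $5/2$), and a bootstrap to upgrade the rate --- is exactly the paper's proof, so the architecture is right. The cosmetic difference (integrating factor $(t+1)^{C_1}$ versus multiplying the energy identity by $g(t)=\widetilde b^{-m}(t+\widetilde b)^m$) is immaterial.

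There is, however, one genuine gap in your treatment of the nonlinearity, and it sits at the only place where the second-grade structure actually bites. You bound $\bigl|\widehat{\mathbb{P}N}(\xi,s)\bigr|\lesssim |\xi|\,\|\mathcal{Q}(\mathbf{u}(s))\|_{L^1}\lesssim |\xi|\,\|\mathbf{u}(s)\|_{V_2}^2$ and then feed in a provisional decay of $\|\mathbf{u}(s)\|_{H^1_\alpha}^2$. These two steps do not match: writing $N=\operatorname{div}\mathcal{Q}(\mathbf{u})$ only peels one derivative off $\alpha\operatorname{curl}(\Delta\mathbf{u})\times\mathbf{u}$, so $\mathcal{Q}$ still contains terms of the schematic form $u\otimes D^2u$, whose $L^1$ norm requires $\|\mathbf{u}\|_{L^2}\|D^2\mathbf{u}\|_{L^2}$, i.e.\ the $V_2$ (or $H^2$) norm. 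But the energy identity \eqref{id_energy} only yields boundedness and decay of the $H^1_\alpha$ norm; no decay of $\|\mathbf{u}(t)\|_{V_2}$ is available, so the bootstrap cannot be closed with your estimate as written (using mere boundedness of the $V_2$ norm gives $\int_0^t\|\mathbf{u}\|_{V_2}^2\,ds\lesssim t$ and destroys the rate). The paper's Lemma \ref{Lema1} fixes exactly this: it integrates by parts a second time in the $\alpha$-term on the Fourier side, trading the extra derivative on $\mathbf{u}$ for an extra power of $|\xi|$, and arrives at $|\widehat{N}(\xi,t)|\le C(|\xi|+|\xi|^2)\|\mathbf{u}(t)\|_{H^1_\alpha}^2$ --- a bound by the decaying quantity alone. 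The additional $|\xi|^2$ is harmless (it integrates to $\rho(t)^7\sim t^{-7/2}$ over the ball, faster than the $\rho(t)^5$ cap), so your conclusion about the $5/2$ saturation survives, but the proof does not go through without this extra integration by parts. Your closing bootstrap is also stated too loosely: the first pass only yields the rate $\min\{3/2+r^*_+,1/2\}$ (since a priori $\int_0^t\|\mathbf{u}\|_{H^1_\alpha}^2\,ds\lesssim t$), and two further iterations with a case analysis in $r^*_+$ (including a logarithmic borderline at $r^*_+=-1/2$) are needed to reach $\min\{3/2+r^*_+,5/2\}$; you flag this as ``the delicate point'' but do not carry it out.
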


There are few results concerning decay of solutions to \eqref{eqn:1}. Assuming small data in $H^3$, Galdi and Passerini \cite{Galdi-1995} proved that $\Vert u(t) \Vert _{L^2} ^2 \leq C \, \left( \ln (t) \right) ^{-1}$, for large enough $t$. Nečasová and Penel \cite{Necasova-Penel06} showed that, for initial data $\mathbf{u}_0 \in H^3 \cap L^1$ with $\operatorname{div} \mathbf{u}_0 = 0$ and no restrictions on size, solutions decay as

\begin{displaymath}
\Vert  \mathbf{u} (t) \Vert _{L^2} ^2 +  \Vert  \Delta \mathbf{u} (t) \Vert _{L^2} ^2   \leq C (1 + t) ^{- \frac{1}{2}}. 
\end{displaymath}
 In both of these results, the authors assume the existence of weak solutions. In Theorem \ref{Main1-Intro} we provide a decay with improved rates for the natural norm \eqref{eqn:norma-h1} for a much larger family of initial data, see Remark \ref{remark-large-family} and note that $r^{\ast} (\mathbf{u}_0) = 0$ for $\mathbf{u}_0 \in L^2 \cap L^1$. Moreover, as the linear part of the equation decays as $t^{- \left( \frac{3}{2} + r^{\ast}_+ \right)}$, see Theorem \ref{teodeclupl158}, we are able to separate the contribution of the linear and nonlinear parts of the solution to the decay in terms of initial data. For the sake of completeness,  in  Section \ref{existence-solutions} we provide a proof of the existence of weak solutions to \eqref{eqn:1}. The choice of subspace of $H^3$ and the hypotheses on small data in Theorem \ref{Main1-Intro} are due to them being essential in the proof of this existence result. For similar decay estimates for solutions to models related to \eqref{eqn:1}, see Anh and Trang \cite{Trang2018}, Bjorland \cite{BjorlandSIMA08}, Bjorland and Schonbek \cite{clayton}, Busuioc \cite{Busuioc09},  Gao, Lyu and Yao \cite{Gao-Lyu-Yao}, Zhao \cite{Zhao2021}. 

We now study some aspects of the asymptotic behavior of solutions to \eqref{eqn:1}, in particular we study decay of $\mathbf{u}(t) - \mathbf{\bar{u}} (t)$, where $\mathbf{\bar{u}}$ is a solution to the pseudo-parabolic linear part

\begin{equation}\label{lplppe}
		\begin{cases}
			\partial_t (\overline{\mathbf{u}} - \alpha \Delta \overline{\mathbf{u}}) - \mu \Delta \overline{\mathbf{u}} = {\boldsymbol 0}, \\ \noalign{\smallskip}
			\operatorname{div} \overline{\mathbf{u}} = 0, \\ \noalign{\smallskip}
			\overline{\mathbf{u}}(0) = \mathbf{u}_0 \in H^1_\alpha(\mathbb{R}^3) ^3.
		\end{cases} 
	\end{equation} 

Our second main result in this article is the following. 

\begin{theorem}\label{Main2-intro}
	Let $\mathbf{u}_0 \in H^4(\mathbb{R}^3)$ with $\operatorname{div} \mathbf{u}_0 = 0$, and suppose that $r^{\ast} _+ (\mathbf{u}_0) = r^{\ast} _+$ and that  $\mathbf{u}_0$ is small in $V_2 (\mathbb{R}^3)$, as in Theorem \ref{Main1-Intro}. Let \(\mathbf{u}\) be a weak solution to \eqref{eqn:1}, and let $\overline{\mathbf{u}}$ be the solution to the linear part \eqref{lplppe} with the same  initial data \(\mathbf{u}_0 \in H^4(\mathbb{R}^3)\). Then, any weak solution
	\(\mathbf{u}(t)\) satisfies the estimate
	\[
	\bigl\|\mathbf{u}(t) - \overline{\mathbf{u}}(t)\bigr\|_{H^1_\alpha(\mathbb{R}^3) ^3}^2 \leq C\, (t + 1)^{-\min \left\{ \frac{5}{2} + \frac{3}{2}r^{\ast} _+, \frac{5}{2} \right\}}, \quad \,\,\forall \; t \geq 0,
	\]
	where the constant $C>0$ depends only on \(  \|\mathbf{u}_0\|_{H^1_\alpha(\mathbb{R}^3) ^3}, \alpha, r^{\ast} _+\), and \(\mu \).

\end{theorem}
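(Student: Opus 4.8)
The plan is to analyze the difference $\mathbf{w} = \mathbf{u} - \overline{\mathbf{u}}$ directly. Applying the Leray projection $\mathbb{P}$ to \eqref{eqn:1} to remove the pressure and subtracting \eqref{lplppe}, and using that the two solutions share the datum $\mathbf{u}_0$, one sees that $\mathbf{w}$ solves the same pseudo-parabolic operator as $\overline{\mathbf{u}}$ but forced by the nonlinearity,
\[
\partial_t(\mathbf{w} - \alpha\Delta\mathbf{w}) - \mu\Delta\mathbf{w} = -\mathbb{P}\bigl[\operatorname{curl}(\mathbf{u}-\alpha\Delta\mathbf{u})\times\mathbf{u}\bigr], \qquad \mathbf{w}(0)={\boldsymbol 0}.
\]
Writing $N(\mathbf{u}) = \operatorname{curl}(\mathbf{u}-\alpha\Delta\mathbf{u})\times\mathbf{u}$ and pairing with $\mathbf{w}$ in $L^2$ gives the energy identity
\[
\tfrac12\tfrac{d}{dt}\bigl\|\mathbf{w}(t)\bigr\|_{H^1_\alpha(\mathbb{R}^3)^3}^2 + \mu\bigl\|\nabla\mathbf{w}(t)\bigr\|_{L^2(\mathbb{R}^3)^3}^2 = -\langle N(\mathbf{u}),\mathbf{w}\rangle .
\]
The key algebraic observation is that $(\mathbf{a}\times\mathbf{u})\cdot\mathbf{u}=0$ pointwise, so $\langle N(\mathbf{u}),\mathbf{u}\rangle = 0$ (this is exactly what produces \eqref{int_energy_equa}); hence $\langle N(\mathbf{u}),\mathbf{w}\rangle = -\langle N(\mathbf{u}),\overline{\mathbf{u}}\rangle$, and the entire nonlinear forcing of the energy of $\mathbf{w}$ is routed through its interaction with the fast-decaying linear solution $\overline{\mathbf{u}}$.

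Next I would run the Fourier splitting method on this identity. Using that on $\{|\xi|>g(t)\}$ one has $1+\alpha|\xi|^2\le (g(t)^{-2}+\alpha)|\xi|^2$, I would choose the splitting radius $g(t)$ so that $\mu g(t)^2/(1+\alpha g(t)^2)=\beta/(2(t+1))$ and obtain, after multiplying by the integrating factor $(t+1)^\beta$,
\[
\tfrac{d}{dt}\Bigl[(t+1)^\beta\|\mathbf{w}\|_{H^1_\alpha}^2\Bigr] \le \beta\,(t+1)^{\beta-1}\!\!\int_{|\xi|\le g(t)}\!\!(1+\alpha|\xi|^2)|\widehat{\mathbf{w}}|^2\,d\xi \;+\; 2(t+1)^{\beta}\,|\langle N(\mathbf{u}),\overline{\mathbf{u}}\rangle| .
\]
Integrating from $\mathbf{w}(0)={\boldsymbol 0}$ then reduces the theorem to estimating two quantities: the low-frequency ball integral and the nonlinear pairing.

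For the ball integral I would use the Duhamel representation $\widehat{\mathbf{w}}(\xi,t) = -\int_0^t e^{-\psi(\xi)(t-s)}(1+\alpha|\xi|^2)^{-1}\widehat{\mathbb{P}N(\mathbf{u})}(\xi,s)\,ds$, with $\psi(\xi)=\mu|\xi|^2/(1+\alpha|\xi|^2)$. On the ball $1+\alpha|\xi|^2\approx 1$ and $\psi(\xi)\approx\mu|\xi|^2$, and $|\widehat{N(\mathbf{u})}(\xi,s)|\le\|N(\mathbf{u})(s)\|_{L^1}$; integrating over the ball of volume $\sim g(t)^3\sim(t+1)^{-3/2}$ produces the factor $(t+1)^{-3/2}$ responsible for the coefficient $\tfrac32$ multiplying $r^{\ast}_+$. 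For the pairing I would use H\"older together with the Sobolev embedding $\|\overline{\mathbf{u}}\|_{L^6}\lesssim\|\nabla\overline{\mathbf{u}}\|_{L^2}$ to get $|\langle N(\mathbf{u}),\overline{\mathbf{u}}\rangle|\lesssim\|\operatorname{curl}(\mathbf{u}-\alpha\Delta\mathbf{u})\|_{L^2}\|\mathbf{u}\|_{L^3}\|\nabla\overline{\mathbf{u}}\|_{L^2}$. The decay of $\|\mathbf{u}\|_{L^3}$ and of $\|\operatorname{curl}(\mathbf{u}-\alpha\Delta\mathbf{u})\|_{L^2}$ is supplied by Theorem \ref{Main1-Intro} and its higher-order analogues, while the decay $\|\nabla\overline{\mathbf{u}}\|_{L^2}\lesssim(t+1)^{-(5/4+r^{\ast}_+/2)}$ is supplied by Theorem \ref{teodeclupl158}; it is this linear factor that injects the $r^{\ast}_+$-dependence.

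Finally I would substitute these bounds into the integrated inequality, take $\beta$ large enough that every time integral is controlled by its endpoint, and read off the rate. One contribution decays at the universal nonlinear self-interaction rate $(t+1)^{-5/2}$ (the volume gain together with one time-integration), while the contribution carrying the decay of $\overline{\mathbf{u}}$ decays like $(t+1)^{-(5/2+\frac32 r^{\ast}_+)}$; the slower of the two governs $\mathbf{w}$, giving $\min\{5/2+\tfrac32 r^{\ast}_+,\,5/2\}$. The main obstacle I anticipate is the sharp low-frequency bookkeeping: one must retain the retarded kernel $e^{-\mu|\xi|^2(t-s)}$ inside the ball rather than bounding it by $1$ (otherwise the time integral of $\|N(\mathbf{u})\|_{L^1}$ fails to converge at the required rate), and one must have decay estimates for the third-order quantity $\|\nabla\Delta\mathbf{u}\|_{L^2}$ appearing in $N(\mathbf{u})$ -- which is precisely where the $H^4(\mathbb{R}^3)$ hypothesis is needed. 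Securing these higher-order decay estimates and combining them so that the powers of $r^{\ast}_+$ add up exactly to $\tfrac32 r^{\ast}_+$ is where the real work lies.
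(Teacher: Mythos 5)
Your overall architecture coincides with the paper's: you form $\mathbf{w}=\mathbf{u}-\overline{\mathbf{u}}$ with $\mathbf{w}(0)=\boldsymbol 0$, observe that $\langle N(\mathbf{u}),\mathbf{w}\rangle=-\langle N(\mathbf{u}),\overline{\mathbf{u}}\rangle$, and run Fourier splitting with a Duhamel bound for the low frequencies. However, the two estimates you defer to ``the real work'' each contain a genuine gap. First, on the low-frequency ball your plan is to bound $|\widehat{N(\mathbf{u})}(\xi,s)|\le\|N(\mathbf{u})(s)\|_{L^1}$ and rescue the time integral by retaining the retarded kernel $e^{-\psi(\xi)(t-s)}$; but on the ball $|\xi|\le\rho(t)\sim(t+1)^{-1/2}$ one has $\psi(\xi)(t-s)\lesssim 1$, so the kernel is bounded below by a positive constant there and provides no gain. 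The mechanism the paper actually uses (Lemma \ref{Lema1}) is the divergence/commutator structure of $N(\mathbf{u})$: integrating by parts inside the Fourier integral yields $|\widehat{N(\mathbf{u})}(\xi,s)|\le C\left(|\xi|+|\xi|^2\right)\|\mathbf{u}(s)\|^2_{H^1_\alpha(\mathbb{R}^3)}$, and it is these extra powers of $|\xi|$, converted into powers of $\rho(t)$ on the ball, that produce the $(t+1)^{-5/2}$ self-interaction rate; without them the ball integral does not close.

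Second, your H\"older bound $|\langle N(\mathbf{u}),\overline{\mathbf{u}}\rangle|\lesssim\|\operatorname{curl}(\mathbf{u}-\alpha\Delta\mathbf{u})\|_{L^2}\|\mathbf{u}\|_{L^3}\|\nabla\overline{\mathbf{u}}\|_{L^2}$ requires decay of a third-order norm of the \emph{nonlinear} solution. No such estimate is available: Theorem \ref{Main1-Intro} controls only the $H^1_\alpha$ norm, the $V_2$ norm is merely bounded, and no ``higher-order analogue'' is proved anywhere in the paper. With boundedness alone the pairing decays like $(t+1)^{-(2+r^*)}$ in the relevant range, which after time integration yields only about $(t+1)^{-(1+r^*)}$ for $\|\mathbf{w}\|^2_{H^1_\alpha}$ --- far short of $(t+1)^{-5/2}$ already at $r^*=0$. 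You also misplace the role of the $H^4$ hypothesis: it is not there to control $\nabla\Delta\mathbf{u}$, but to apply Proposition \ref{teoder128} with $m=1$ and $m=3$ to the \emph{linear} solution. The paper's way out is to integrate by parts in $\langle N(\mathbf{u}),\overline{\mathbf{u}}\rangle$ so that every high derivative lands on $\overline{\mathbf{u}}$, estimated in $L^\infty$ by Gagliardo--Nirenberg against $\|\nabla\overline{\mathbf{u}}\|_{H^1_\alpha}^{1/4}\|D^3\overline{\mathbf{u}}\|_{H^1_\alpha}^{3/4}$, while $\mathbf{u}$ enters only through $\|\mathbf{u}\|^2_{H^1_\alpha}$; the exponent $\tfrac32 r^*_+$ arises from the product of these three decay rates (an eighth, three eighths, and one full copy of $r^*$), not from the ball volume as you suggest. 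Finally, the case analysis in $r^*$ needed to upgrade the first crude bound to the stated minimum is absent from your sketch.
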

Then, the solution $\mathbf{u}$ of \eqref{eqn:1} is asymptotically equivalent to the solution $\overline{\mathbf{u}}$ of the pseudo-parabolic equation \eqref{plppe} with the same data. Other aspects of asymptotic behavior of \eqref{eqn:1}, in particular of the vorticity $\omega = \nabla \times \mathbf{u}$, have been studied by Jaffal-Mourtada \cite{Jaffal-Mourtada}. We need to increase the regularity of initial data to $\mathbf{u}_0 \in H^4(\mathbb{R}^3)$ in order to use the estimates for $\overline{\mathbf{u}}$ in Theorem \ref{teoder128}, see Remark \ref{remark-H4} for a detailed explanation of this fact.

As a consequence of Theorem \ref{Main2-intro}, lower bounds for decay of $\mathbf{u}$ can be obtained for a restricted range of $r^{\ast}$, providing sharp estimates for this family of initial data.  

\begin{corollary}
\label{lower-bound} Let $\mathbf{u}_0$ be as in Theorem \ref{Main2-intro}. If $r^* _+= r^* _+ (\mathbf{u}_0) \in (-3/2, 1)$, then the weak solution $\mathbf{u}$ to \eqref{eqn:1} which such initial data satisfies
\[
\bigl\|\mathbf{u}(t)\bigr\|_{H^1_\alpha(\mathbb{R}^3)}^2 \geq C\, (t + 1)^{-\min \left\{ \frac{3}{2} + r^*, \frac{5}{2} \right\}}, \quad \,\,\forall \; t \geq 0,
\]
where $C>0$ depends only on $\|\mathbf{u}_0\|_{H^1_\alpha(\mathbb{R}^3)}, \alpha, r^*, \mu $.
\end{corollary}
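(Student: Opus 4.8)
The plan is to derive the lower bound from a reverse triangle inequality, pitting the sharp decay of the linear flow $\overline{\mathbf{u}}$ against the strictly faster decay of the difference $\mathbf{u} - \overline{\mathbf{u}}$ furnished by Theorem \ref{Main2-intro}. Writing
\[
\bigl\|\mathbf{u}(t)\bigr\|_{H^1_\alpha(\mathbb{R}^3)} \geq \bigl\|\overline{\mathbf{u}}(t)\bigr\|_{H^1_\alpha(\mathbb{R}^3)} - \bigl\|\mathbf{u}(t) - \overline{\mathbf{u}}(t)\bigr\|_{H^1_\alpha(\mathbb{R}^3)},
\]
I would first invoke the decay-character estimates for the pseudo-parabolic problem \eqref{lplppe} (Theorem \ref{teodeclupl158}) to obtain a \emph{two-sided} bound for the linear solution. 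Here one reads the hypothesis as asserting that the decay character genuinely exists, $r^* = r^*_+ = r^*_-$, so that for suitable constants $c_1, C_1 > 0$,
\[
c_1\, (t+1)^{-(\frac{3}{2} + r^*)} \leq \bigl\|\overline{\mathbf{u}}(t)\bigr\|_{H^1_\alpha(\mathbb{R}^3)}^2 \leq C_1\, (t+1)^{-(\frac{3}{2} + r^*)}, \quad t \geq 0.
\]

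The crux is then a bookkeeping of decay exponents. Set $a(r^*) := \tfrac{3}{2} + r^*$ for the linear part and $b(r^*) := \min\bigl\{\tfrac{5}{2} + \tfrac{3}{2} r^*,\, \tfrac{5}{2}\bigr\}$ for the difference, which by Theorem \ref{Main2-intro} obeys $\|\mathbf{u}(t) - \overline{\mathbf{u}}(t)\|_{H^1_\alpha(\mathbb{R}^3)}^2 \leq C_2\,(t+1)^{-b(r^*)}$. I must check that $b(r^*) > a(r^*)$ precisely on $(-3/2, 1)$. For $r^* \leq 0$ one has $b = \tfrac{5}{2} + \tfrac{3}{2} r^*$ and $b - a = 1 + \tfrac{1}{2} r^* > 0$ exactly for $r^* > -2$, hence throughout $(-3/2, 0]$; for $r^* > 0$ one has $b = \tfrac{5}{2}$ and $b - a = 1 - r^* > 0$ exactly for $r^* < 1$. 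This is where the restriction $r^* \in (-3/2, 1)$ is forced, and the same range gives $\tfrac{3}{2} + r^* < \tfrac{5}{2}$, so the exponent $\min\{\tfrac{3}{2} + r^*,\, \tfrac{5}{2}\}$ in the statement collapses to $\tfrac{3}{2} + r^*$.

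With $b > a$ secured, for $t$ large the difference term is swallowed by half of the linear lower bound: since $\|\mathbf{u}-\overline{\mathbf{u}}\|_{H^1_\alpha} \leq \sqrt{C_2}\,(t+1)^{-b/2}$ while $\|\overline{\mathbf{u}}\|_{H^1_\alpha} \geq \sqrt{c_1}\,(t+1)^{-a/2}$, one may choose $T_0$ so that $\sqrt{C_2}\,(t+1)^{-b/2} \leq \tfrac{1}{2}\sqrt{c_1}\,(t+1)^{-a/2}$ for all $t \geq T_0$, whence the reverse triangle inequality yields $\|\mathbf{u}(t)\|_{H^1_\alpha} \geq \tfrac{1}{2}\sqrt{c_1}\,(t+1)^{-a/2}$; squaring gives the claimed estimate for $t \geq T_0$. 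On the compact interval $[0, T_0]$ I would close the argument by a continuity and positivity consideration: $t \mapsto \|\mathbf{u}(t)\|_{H^1_\alpha}^2$ is continuous and strictly positive (the data is nonzero because $r^* < \infty$, and the energy identity \eqref{int_energy_equa} rules out finite-time extinction), so it admits a positive lower bound there which is absorbed into $C$. The main obstacle I anticipate is securing the sharp \emph{lower} bound for the linear flow in Theorem \ref{teodeclupl158}, that is, ensuring the decay character is genuinely two-sided on the stated range; without it the reverse triangle inequality is vacuous, whereas the exponent comparison and the small-time step are routine.
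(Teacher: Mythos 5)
Your proposal is correct and follows essentially the same route as the paper: the reverse triangle inequality $\|\mathbf{u}(t)\|_{H^1_\alpha} \geq \|\overline{\mathbf{u}}(t)\|_{H^1_\alpha} - \|\mathbf{u}(t)-\overline{\mathbf{u}}(t)\|_{H^1_\alpha}$, the two-sided linear bound of Theorem \ref{teodeclupl158}, and the observation that the range $r^*\in(-3/2,1)$ is exactly where the difference decays strictly faster than the linear part. In fact your write-up is more complete than the paper's two-line argument, since you also verify the exponent comparison explicitly and supply the continuity/non-extinction argument needed to extend the bound from $t\geq T_0$ to all $t\geq 0$.
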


This article is organized as follows. In Section \ref{preliminaries} we recall definitions, results and estimates needed later on in our proofs. In Section \ref{proof-Main1} we prove Theorem \ref{Main1-Intro}, while in Section \ref{proof-Main2} we prove Theorem \ref{Main2-intro} and Corollary \ref{lower-bound}.

\paragraph{Acknowledgments.}
	The authors thank Drago\c{s} Iftimie for his comments, suggestions  and remarks concerning this work.  F.\;W.\;Cruz was partially supported by CNPq grants No.\;304605/2022-0 and No.\;200039/2024-5. C.\;F.\;Perusato was partially supported by a CNPq grant No. \;200124/2024-2 and CNPq grant bolsa PQ No.\; 310444/2022-5. M.\;A.\;Rojas-Medar was partially funded by the National Agency for Research and Development (ANID) through the Fondecyt project 1240152. 

\section{Preliminaries}

\label{preliminaries}
\subsection{Existence of weak solutions} \label{existence-solutions} We provide a sketch of a proof  of the existence of weak solutions for equations \eqref{eqn:1} in $\mathbb{R} ^3$. As we follow closely the arguments in Cioranescu and Girault \cite{vivette_Doina} for the existence of a solution in bounded domains, and in Bjorland and Schonbek \cite{clayton} for using such kind of results to obtain solutions in the whole space for the Camassa-Holm equations, we only indicate the main points in our proof and do not provide details for  the remaining, standard arguments.

We first recall the definitions and results we need from Cioranescu and Girault \cite{vivette_Doina}. Let $\Omega \subset \mathbb{R}^3$ be an open bounded domain with $\partial \Omega \in C^{3,1}$.  We then consider the following functions spaces

\begin{equation}\label{eqn:spaces}
    \begin{split}
        H^1 _0 (\Omega)  = \{v \in H^1 (\Omega):  v = 0  \, \,  \text{in}  \, \, \partial \Omega \}, \\ 
        V = \{\mathbf{v} \in H^1 _0 (\Omega) ^3: \operatorname{div} \, \mathbf{v} = 0 \, \,  \text{in} \, \, \partial \Omega \}, \notag \\  
        V_2 = \{\mathbf{v} \in V: \operatorname{curl} (\mathbf{v} - \alpha \Delta \mathbf{v}) \in L^2 (\Omega) ^3 \}.
    \end{split}
\end{equation} 
From Lemma 2.1 in \cite{vivette_Doina}, we have that the $H^3$ norm and the norm $\Vert \cdot \Vert _{V_2}$ induced by the inner product

\begin{equation}
\label{eqn:v2-norm}
    (\mathbf{u}, \mathbf{v}) _{V_2} = (\mathbf{u}, \mathbf{v}) + \alpha (\nabla \mathbf{u}, \nabla \mathbf{v}) + (\operatorname{curl} (\mathbf{u} - \alpha \Delta \mathbf{u}),\operatorname{curl}  (\mathbf{v} - \alpha \Delta \mathbf{v}))
\end{equation}
are equivalent in $V_2$, where $(\cdot,\cdot)$ is the usual $L^2$ inner product. 

We recall now the trilinear form

\begin{displaymath}
    b(\mathbf{u}, \mathbf{v}, \mathbf{w}) = \sum _{i,j = 1} ^3 = \int_{\Omega} u_j \frac{\partial v_i}{x_j} w _i \, dx. 
\end{displaymath}
The fact that

\begin{displaymath}
\int_{\Omega} \operatorname{curl}(\mathbf{u} - \alpha \Delta \mathbf{u}) \times \mathbf{u} \cdot \mathbf{v} \, dx = b(\mathbf{u}, \mathbf{u}, \mathbf{v}) - \alpha  b(\mathbf{u}, \Delta \mathbf{u}, \mathbf{v})  + \alpha b(\mathbf{v}, \Delta \mathbf{u}, \mathbf{u})    
\end{displaymath}
leads to the following definition.

\begin{definition} \label{solution-bounded-domain}
    Let $\mathbf{u}_0 \in V_2$. We say that $\mathbf{u} \in L^{\infty} (0,T; V_2)$ such that $\mathbf{u'} \in L^{\infty} (0,T; V)$ is a weak solution to \eqref{eqn:1} if for a.e. $t$ in $(0,T)$ and every $\mathbf{v} \in V$ 

\begin{displaymath}
    (\mathbf{u'}, \mathbf{v}) + \alpha \,  (\nabla \mathbf{u'}, \nabla \mathbf{v}) + \mu \,  (\nabla \mathbf{u}, \nabla \mathbf{v}) + b(\mathbf{u}, \mathbf{u}, \mathbf{v}) - \alpha \,   b(\mathbf{u}, \Delta \mathbf{u}, \mathbf{v})  + \alpha \,  b(\mathbf{v}, \Delta \mathbf{u}, \mathbf{u}) = 0.
\end{displaymath}
    
\end{definition}

The following is the key result  which we will use as main ingredient in our proof.

\begin{theorem}[Theorem 4.7, \cite{vivette_Doina}] \label{existence-cioranescu-girault} For any small enough $\mathbf{u}_0 \in V_2$ there exists a unique weak solution to \eqref{eqn:1} for all $t \geq 0$, in the sense of Definition \ref{solution-bounded-domain}.
\end{theorem}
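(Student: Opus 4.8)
The plan is to prove existence by a Galerkin scheme and uniqueness by an energy argument, with the smallness of $\mathbf{u}_0$ entering only to close the top-order estimate globally in time. Since $V_2 \hookrightarrow V$ compactly, I would fix a Hilbert basis $\{w_k\}$ of $V_2$ (for instance the eigenfunctions of the operator induced by the inner product \eqref{eqn:v2-norm}), set $V_m = \operatorname{span}\{w_1, \dots, w_m\}$, and seek $\mathbf{u}_m(t) \in V_m$ solving the projected system obtained by testing the weak formulation of Definition \ref{solution-bounded-domain} against each $w_k$. Because the trilinear terms are polynomial in the coefficients, the resulting ODE system is locally Lipschitz and admits a local-in-time solution by Cauchy--Lipschitz; the a priori bounds below then extend it to all $t \ge 0$.

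The first, global, estimate comes from testing with $\mathbf{v} = \mathbf{u}_m$. Here the nonlinear contribution vanishes identically: $b(\mathbf{u}_m, \mathbf{u}_m, \mathbf{u}_m) = 0$ by the antisymmetry $b(\mathbf{u}, \mathbf{v}, \mathbf{v}) = 0$ for divergence-free fields, while the two remaining terms $-\alpha\, b(\mathbf{u}_m, \Delta \mathbf{u}_m, \mathbf{u}_m) + \alpha\, b(\mathbf{u}_m, \Delta \mathbf{u}_m, \mathbf{u}_m)$ cancel. This reproduces the energy identity \eqref{int_energy_equa} at the discrete level and yields a uniform bound for $\mathbf{u}_m$ in $L^\infty(0,T; V)$, with $\nabla \mathbf{u}_m \in L^2(0,T; L^2)$, independently of $m$ and $T$. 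The crucial step is the top-order estimate in $V_2$, which controls $\operatorname{curl}(\mathbf{u}_m - \alpha \Delta \mathbf{u}_m)$ in $L^2$ and hence, by the norm equivalence of Lemma 2.1 in \cite{vivette_Doina}, the full $H^3$ norm. I would test against the projection of $\operatorname{curl}(\mathbf{u}_m - \alpha \Delta \mathbf{u}_m)$, exploiting the transport--diffusion structure satisfied by this potential vorticity. Unlike in the first estimate, the nonlinear terms no longer cancel; they are cubic in $\mathbf{u}_m$ and, after integration by parts, are bounded using H\"older's inequality and the Sobolev embeddings available on the bounded domain $\Omega$ by $C \|\mathbf{u}_m\|_{V_2}^3$. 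This produces a differential inequality of the form $\frac{d}{dt} y(t) \le -c\, y(t) + C\, y(t)^{3/2}$, or a comparable superlinear form, for $y = \|\mathbf{u}_m\|_{V_2}^2$. This is exactly where the hypothesis $\|\mathbf{u}_0\|_{V_2} < \epsilon$ enters: for $\epsilon$ small enough the dissipative term dominates, so a continuation argument keeps $y(t)$ bounded for all $t \ge 0$, giving a uniform bound for $\mathbf{u}_m$ in $L^\infty(0,\infty; V_2)$.

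With these bounds in hand I would recover the time-derivative bound $\mathbf{u}_m' \in L^\infty(0,T; V)$ directly from the projected equation, extract weak-$\ast$ convergent subsequences by Banach--Alaoglu, and use the Aubin--Lions lemma to upgrade to convergence strong enough to pass to the limit in every trilinear term; the limit $\mathbf{u}$ then solves \eqref{eqn:1} in the sense of Definition \ref{solution-bounded-domain}. For uniqueness, given two solutions with the same data I would set $\mathbf{w} = \mathbf{u}_1 - \mathbf{u}_2$, test the difference equation with $\mathbf{w}$, control the nonlinear differences using the $V_2$ regularity and Sobolev embeddings, and close the estimate by Gr\"onwall (again with smallness ensuring the Gr\"onwall coefficient stays integrable). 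The main obstacle is unquestionably the $V_2$-level estimate: the terms $b(\mathbf{u}, \Delta \mathbf{u}, \cdot)$ carry the highest derivatives and do not enjoy the cancellation that trivializes the $L^2$ energy balance, so closing a \emph{global} nonlinear differential inequality there is precisely what forces both the special space $V_2$ and the smallness of the initial data.
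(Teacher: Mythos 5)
This statement is not proved in the paper at all: it is quoted verbatim as Theorem 4.7 of Cioranescu and Girault \cite{vivette_Doina} and used as an imported ingredient, so there is no internal proof to compare against. Your sketch is nevertheless a faithful reconstruction of the argument in that reference: Cioranescu--Girault do use a Galerkin scheme with a special basis adapted to the $V_2$ inner product \eqref{eqn:v2-norm}, the exact cancellation $b(\mathbf{u},\mathbf{u},\mathbf{u}) - \alpha\, b(\mathbf{u},\Delta\mathbf{u},\mathbf{u}) + \alpha\, b(\mathbf{u},\Delta\mathbf{u},\mathbf{u}) = 0$ to get the $H^1_\alpha$ energy identity, and then the transport equation satisfied by $z = \operatorname{curl}(\mathbf{u} - \alpha\Delta\mathbf{u})$ (with damping $\frac{\mu}{\alpha}z$ and stretching $z\cdot\nabla\mathbf{u}$) to close a superlinear differential inequality at the $V_2$ level, where smallness of $\|\mathbf{u}_0\|_{V_2}$ is exactly what keeps the cubic term subordinate to the damping in three dimensions. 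Two minor remarks: testing ``against the projection of $\operatorname{curl}(\mathbf{u}_m - \alpha\Delta\mathbf{u}_m)$'' is better phrased as exploiting the special basis so that the $V_2$ estimate comes out of the Galerkin system directly, which is why the basis choice matters; and uniqueness in this class does not actually require smallness, since the Gr\"onwall coefficient involves only the $L^\infty(0,T;V_2)$ norms of the two solutions, which are finite on any interval of existence. Neither point affects the correctness of your outline.
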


We now proceed to sketch the proof of the existence of a weak solution to \eqref{eqn:1} in $\mathbb{R}^3$. Take $\mathbf{u}_0 \in V_2 ({\mathbb{R}^3})$, where

\begin{displaymath}
    V_2 ({\mathbb{R}^3}) = \{\mathbf{u}: \mathbf{u} \in H^1 (\mathbb{R}^3) ^3, \operatorname{curl} (\mathbf{v} - \alpha \Delta \mathbf{v}) \in L^2 (\mathbb{R}^3) ^3 \},
\end{displaymath}
endowed with the norm induced by \eqref{eqn:v2-norm}. Consider the ball $B^{R_k}$, centered at the origin with radius $R_k$. Localize $\mathbf{u}_0$ by taking $\mathbf{u}_0 ^k = \mathbf{u}_0 \, \chi _{B^{R_k - \epsilon}}$, for some small $\epsilon > 0$, where $\chi _{B^{R_k - \epsilon}}$ is a smooth cutoff function which is identically one in $B^{R_k - \epsilon}$ and equal to zero on $\partial B^{R_k}$.  As $\mathbf{u}_0 ^k \in V_2 = V_2 (B^{R_k})$, from Theorem \ref{existence-cioranescu-girault} we have a unique weak solution $\mathbf{u}^k$ in $B^{R_k}$. From the statement of Theorem 2.1 and the proof of Proposition 4.6 in \cite{vivette_Doina} it can be seen that $\mathbf{u}^k$ and $\mathbf{u'}^ {\, k}$ are bounded, in $L^{\infty} (\mathbb{R} _+; V_2)$ and $L^{\infty} (\mathbb{R} _+; V)$ respectively,  by constants that do not depend on the size of $B^{R_k}$. In fact, these constants depend only on $\alpha, \mu$, the constant in the Sobolev embedding  $H^3 \subset C^1 _0$ and the continuity constant in the Stokes operator, see pages 312 and 320, (2.13) and page 330 in \cite{vivette_Doina}. Moreover, from (4.18) in the statement of Theorem 4.6, smallness of initial data in  Theorem 2.1 can also be expressed in terms of these constants only, i.e. independently of $k$.

Now take $\mathbf{u}^k$ from the previous paragraph and extend it to $\mathbb{R} ^3$ by making it identically zero outside of $B^{R_k}$. By abuse of notation, call this extension $\mathbf{u}^k$ as well. It follows from the previous paragraph that these global-in-space $\mathbf{u}^k$ are uniformly bounded in $k$. Then we have 

\begin{align} 
& \mathbf{u} ^k  \overset{\ast}{\rightharpoonup} \mathbf{u} \, \, \text{in} \, \,  L^{\infty} (\mathbb{R} _+; V_2 ({\mathbb{R}^3})), \label{eqn:convergence-u} \\
& \mathbf{u'} ^k  \overset{\ast}{\rightharpoonup} \mathbf{u'} \, \, \text{in} \, \,  L^{\infty} (\mathbb{R} _+; V ({\mathbb{R}^3})).\label{eqn:convergence-u-prime}
\end{align}

We now provide the definition of weak solution for \eqref{eqn:1}.  

\begin{definition} \label{solution-whole-space}
    Let $\mathbf{u}_0 \in V_2 ({\mathbb{R}^3})$. We say that $\mathbf{u} \in L^{\infty} (0,T; V_2 ({\mathbb{R}^3}))$ such that $\mathbf{u'} \in L^{\infty} (0,T; V ({\mathbb{R}^3}))$ is a weak solution to \eqref{eqn:1} if for a.e. $t$ in $(0,T)$ and every compactly supported $\phi \in C^{\infty} _c ({\mathbb{R}^3})$ 
\begin{displaymath}
    (\mathbf{u'}, \phi) + \alpha \,  (\nabla \mathbf{u'}, \nabla \phi) + \mu \,  (\nabla \mathbf{u}, \nabla \phi) + b(\mathbf{u}, \mathbf{u}, \phi) - \alpha \,   b(\mathbf{u}, \Delta \mathbf{u}, \phi)  + \alpha \,  b(\phi, \Delta \mathbf{u}, \mathbf{u}) = 0.
\end{displaymath}   
    
\end{definition}
Now take an arbitrary $\phi \in C^{\infty} _c ({\mathbb{R}^3})$ and $k$ such that $\phi \subset B^{R_k}$. Extend $\phi$ to $B^{R_k}$ by defining it as zero in the complement of its support. Then $\phi \in V (B^{R_k})$ and we can apply Theorem  \ref{existence-cioranescu-girault} to obtain

\begin{align}
\label{eqn:approx-convergence}
    (\mathbf{u'}  ^{\, k}, \phi) & + \alpha \,   (\nabla \mathbf{u'} ^{\, k}, \nabla \mathbf{v}) + \mu \,  (\nabla \mathbf{u^k}, \nabla \phi) + b(\mathbf{u}^k, \mathbf{u}^k, \phi) \notag \\ & - \alpha \,   b(\mathbf{u}^k, \Delta \mathbf{u} ^k, \phi)  + \alpha \, b(\phi, \Delta \mathbf{u} ^k, \mathbf{u} ^k) = 0.
\end{align}
Convergence of $\mathbf{u} ^k$ and $\mathbf{u'} ^k$ in \eqref{eqn:convergence-u} and \eqref{eqn:convergence-u-prime} and standard arguments for treating each term in \eqref{eqn:approx-convergence} lead us to conclude the following.

\begin{theorem}For any small enough $\mathbf{u}_0 \in V_2 (\mathbb{R}^3)$ there exists a unique weak solution to \eqref{eqn:1} for all $t \geq 0$.
\end{theorem}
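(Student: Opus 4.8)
The plan is to pass to the limit as $k \to \infty$ in the approximate identity \eqref{eqn:approx-convergence} for a fixed test function $\phi \in C^\infty_c(\mathbb{R}^3)$, thereby recovering the identity of Definition \ref{solution-whole-space}, and then to establish uniqueness by an energy argument. Fix $T > 0$ and $N$ so that $\operatorname{supp}\phi \subset B^{R_N}$; since every term in \eqref{eqn:approx-convergence} is paired with $\phi$ or one of its derivatives, each integral only sees the fixed ball $B^{R_N}$. The three linear terms $(\mathbf{u'}^{\,k}, \phi)$, $\alpha(\nabla \mathbf{u'}^{\,k}, \nabla \phi)$ and $\mu(\nabla \mathbf{u}^k, \nabla \phi)$ pass to their natural limits immediately from the weak-$\ast$ convergences \eqref{eqn:convergence-u} and \eqref{eqn:convergence-u-prime}, because $\phi$ and $\nabla \phi$ are fixed elements against which the weak-$\ast$ limits act. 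The entire difficulty is therefore concentrated in the three trilinear terms, where weak convergence of a product is not implied by weak convergence of its factors.

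To handle those, I would first upgrade \eqref{eqn:convergence-u} to \emph{strong local} convergence. The bounds that are uniform in $k$ give $\mathbf{u}^k$ bounded in $L^\infty(0,T; V_2(\mathbb{R}^3)) \hookrightarrow L^\infty(0,T; H^3)$ and $\mathbf{u'}^{\,k}$ bounded in $L^\infty(0,T; H^1)$. Restricting to the fixed ball $B^{R_N}$, where the embedding $H^3(B^{R_N}) \hookrightarrow H^2(B^{R_N})$ is compact, an Aubin--Lions--Simon argument yields, after extracting a further subsequence, strong convergence $\mathbf{u}^k \to \mathbf{u}$ in $C([0,T]; H^2(B^{R_N}))$; in particular $\mathbf{u}^k$, $\nabla \mathbf{u}^k$ and $\Delta \mathbf{u}^k$ converge strongly in $L^2(B^{R_N})$, and $\mathbf{u}^k \to \mathbf{u}$ in $L^\infty(B^{R_N})$, uniformly in $t$. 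In the lowest-order term $b(\mathbf{u}^k, \mathbf{u}^k, \phi)$ the integrand is already a product of strongly convergent factors against $\phi$, so it passes to the limit. For the two higher-order terms $b(\mathbf{u}^k, \Delta \mathbf{u}^k, \phi)$ and $b(\phi, \Delta \mathbf{u}^k, \mathbf{u}^k)$, I would integrate by parts in $x_j$ to move the extra derivative off $\Delta \mathbf{u}^k$ onto the smooth factor $\phi$ (and the other $\mathbf{u}^k$); using $\operatorname{div}\mathbf{u}^k = 0$ to discard one resulting term, each is rewritten as an integral over $B^{R_N}$ of products of $\mathbf{u}^k$, $\nabla\mathbf{u}^k$, $\Delta\mathbf{u}^k$ against $\phi$ and its derivatives, every factor of which now converges strongly. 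Hence the trilinear terms converge as well. This localization is precisely what makes the argument work: the Sobolev embeddings are \emph{not} compact on all of $\mathbb{R}^3$, so the weak-$\ast$ convergence cannot be upgraded globally, and the whole passage to the limit hinges on confining the analysis to $\operatorname{supp}\phi$, where compactness is restored, and on trading the uncontrolled third derivative $\nabla\Delta\mathbf{u}^k$ for strongly convergent lower-order factors. This is the main obstacle.

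Combining the two steps shows that the limit $\mathbf{u}$ satisfies Definition \ref{solution-whole-space} for every $\phi \in C^\infty_c(\mathbb{R}^3)$ and a.e.\ $t$, while the weak-$\ast$ limits place $\mathbf{u} \in L^\infty(0,T; V_2(\mathbb{R}^3))$ with $\mathbf{u'} \in L^\infty(0,T; V(\mathbb{R}^3))$; exhausting in $T$ gives a solution for all $t \geq 0$. The initial condition follows from the strong convergence in $C([0,T]; H^2(B^{R_N}))$ together with $\mathbf{u}_0^k \to \mathbf{u}_0$, giving $\mathbf{u}(0) = \mathbf{u}_0$ on each ball and hence on $\mathbb{R}^3$. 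Finally, for uniqueness I would take two solutions $\mathbf{u}_1, \mathbf{u}_2$ with the same data, set $\mathbf{w} = \mathbf{u}_1 - \mathbf{u}_2$, and test the difference of the two weak formulations against $\mathbf{w}$ itself, which is admissible after approximating $\mathbf{w} \in V(\mathbb{R}^3)$ by functions in $C^\infty_c(\mathbb{R}^3)$. The quadratic structure of $b$, together with the smallness of the data and the uniform $V_2$ bound, should yield a differential inequality of the form $\frac{d}{dt}\bigl\|\mathbf{w}(t)\bigr\|_{H^1_\alpha(\mathbb{R}^3)^3}^2 \leq C\,\bigl\|\mathbf{w}(t)\bigr\|_{H^1_\alpha(\mathbb{R}^3)^3}^2$; since $\mathbf{w}(0) = 0$, Gr\"onwall's inequality forces $\mathbf{w} \equiv 0$.
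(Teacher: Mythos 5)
Your proposal is correct and follows essentially the same route as the paper: the paper constructs the solution as the weak-$\ast$ limit of the localized solutions $\mathbf{u}^k$ and then appeals to ``standard arguments'' to pass to the limit in \eqref{eqn:approx-convergence}, and your Aubin--Lions compactness on a fixed ball containing $\operatorname{supp}\phi$, together with the integration by parts that trades $\nabla \Delta \mathbf{u}^k$ for strongly convergent lower-order factors, is precisely the content of those omitted standard arguments. The only ingredient the paper does not even sketch is uniqueness on the whole space (Theorem \ref{existence-cioranescu-girault} gives it only on bounded domains), and your Gr\"onwall argument for the difference of two solutions is the natural way to supply it.
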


\subsection{Decay character theory}\label{decay-character-section} In this Section we provide definitions and results concerning the decay character and its relation to decay of solutions to linear dissipative  equations. Throughout this article, we denote the Fourier transform either by $\mathcal{F}$ or \,$\widehat{}$, 
	$$
	\mathcal{F}\{\varphi\}(\xi) \equiv \widehat{\varphi}(\xi) :=
	\int_{\mathbb{R}^n} \! e^{-i \,\xi \cdot x}
	\varphi(x) \,dx, \qquad i^2 = -1.
	$$
	In particular, by Plancherel's theorem,
	we have $\| \widehat{\varphi}\|_{L^2 (\mathbb{R}^n)} =
	(2 \pi)^{\frac{n}{2}} \| \varphi \|_{L^2 (\mathbb{R}^n)}$,
	for every $\varphi \in L^{2} (\mathbb{R}^n)$.

\begin{definition}[Bjorland and Schonbek \cite{Bjorland-Schonbek09}, Niche and Schonbek \cite{cesar_london}] \label{decay-indicator}
	Let $\mathbf{v}_0 \in L^2(\mathbb{R}^n)$ and $B(\rho) := \{ \xi \in \mathbb{R}^{n} \,;\, |\xi| \leq \rho \}$. For $r \in \left(-\frac{n}{2}, \infty\right)$, we define the \textit{decay indicator} of $\mathbf{v}_0$ as
	$$P_r(\mathbf{v}_0) :=  \lim_{\rho\rightarrow 0^+} \rho^{-2r-n}\int_{B(\rho)} |\widehat{\mathbf{v}_0}(\xi)|^2 \, d\xi ,$$
	provided this limit exists.
\end{definition}

\begin{remark}
	Note that the decay indicator is always zero when $r \leq -n/2$. Moreover, observe that the decay indicator compares $\left|\widehat{\mathbf{v}_0}(\xi)\right|^2$ to $|\xi|^{2r}$ near $\xi=0$. 
\end{remark}

\begin{definition}[Bjorland and Schonbek \cite{Bjorland-Schonbek09}, Niche and Schonbek \cite{cesar_london}] \label{df-decay-character} The \textit{decay character} of $\mathbf{v}_0 \in L^2(\mathbb{R}^n)$, denoted by $r^*=r^*(\mathbf{v}_0)$, is the unique $r \in \left(-\frac{n}{2}, \infty \right)$ such that $0 < P_r (\mathbf{v}_0)<\infty$, provided this number exists.
\end{definition}

\begin{theorem}[Niche and Schonbek \cite{cesar_london}] \label{cnmesl15}
Suppose $\mathbf{v}_0 \in H^s(\mathbb{R}^n)$ with $s > 0$. Let $r_s^*(\mathbf{v}_0) = r^*( \Lambda^s \mathbf{v}_0)$, where $\Lambda = \left( - \Delta \right) ^{\frac{1}{2}}$. Then $-\frac{n}{2} + s < r_s^*(\mathbf{v}_0) < \infty$ and $r_s^*(\mathbf{v}_0) = s + r^*(\mathbf{v}_0)$.
\end{theorem}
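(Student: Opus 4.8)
The plan is to reduce everything to the radial distribution function
\[
F(\rho) := \int_{B(\rho)} |\widehat{\mathbf{v}_0}(\xi)|^2\,d\xi,
\]
whose behavior near $\rho = 0$ encodes the decay character of $\mathbf{v}_0$, and then to relate the corresponding weighted quantity for $\Lambda^s \mathbf{v}_0$ to $F$ by an elementary Fubini argument. First I would record the two facts underlying the computation. Since $\Lambda^s$ acts as multiplication by $|\xi|^s$ on the Fourier side, $\widehat{\Lambda^s \mathbf{v}_0}(\xi) = |\xi|^s \widehat{\mathbf{v}_0}(\xi)$, and because $|\xi|^{2s} \le (1+|\xi|^2)^s$ the hypothesis $\mathbf{v}_0 \in H^s$ guarantees $\Lambda^s \mathbf{v}_0 \in L^2(\mathbb{R}^n)$, so its decay character is well defined. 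Writing $r := r^*(\mathbf{v}_0)$ and $L := P_r(\mathbf{v}_0) \in (0,\infty)$, the existence of the decay character means precisely that $F(\rho) = (L + o(1))\,\rho^{2r+n}$ as $\rho \to 0^+$, with $2r + n > 0$ since $r > -n/2$.

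Next comes the central computation, relating the weighted quantity
\[
G(\rho) := \int_{B(\rho)} |\xi|^{2s}\,|\widehat{\mathbf{v}_0}(\xi)|^2\,d\xi,
\]
whose normalization defines $P_{\tilde r}(\Lambda^s \mathbf{v}_0)$, to $F$. Using $|\xi|^{2s} = 2s\int_0^{|\xi|} t^{2s-1}\,dt$ together with Fubini's theorem, I obtain the identity
\[
G(\rho) = \rho^{2s}\, F(\rho) - 2s\int_0^\rho t^{2s-1}\, F(t)\,dt,
\]
valid for every $\rho > 0$, the integral converging near $0$ because $2r+n>0$. Substituting the asymptotics of $F$ and using $\int_0^\rho t^{2s-1}t^{2r+n}\,dt = \rho^{2s+2r+n}/(2s+2r+n)$, the leading terms combine to yield
\[
\rho^{-(2s+2r)-n}\, G(\rho)\ \longrightarrow\ L\left(1 - \frac{2s}{2s+2r+n}\right) = L\,\frac{2r+n}{2s+2r+n} \qquad (\rho \to 0^+).
\]
Thus $P_{s+r}(\Lambda^s \mathbf{v}_0)$ exists and equals $L(2r+n)/(2s+2r+n)$. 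Since $r > -n/2$ forces both $2r+n>0$ and $2s+2r+n>0$, this limit lies strictly in $(0,\infty)$, which by Definition \ref{df-decay-character} identifies $r^*(\Lambda^s\mathbf{v}_0) = s + r = s + r^*(\mathbf{v}_0)$. The bound $-n/2 + s < r_s^*(\mathbf{v}_0)$ then follows at once from $r^*(\mathbf{v}_0) > -n/2$, and finiteness is immediate.

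The step I expect to require the most care is passing rigorously from the asymptotic $F(t) = (L+o(1))t^{2r+n}$ to the limit for $G$: since $F$ is only known asymptotically, and is merely monotone rather than smooth, I would run an $\varepsilon$-argument. Fixing $\varepsilon>0$, I choose $\delta>0$ so that $(L-\varepsilon)t^{2r+n}\le F(t)\le (L+\varepsilon)t^{2r+n}$ for all $t\in(0,\delta)$, and insert these two-sided bounds separately into the boundary term $\rho^{2s}F(\rho)$ and into $\int_0^\rho t^{2s-1}F(t)\,dt$. For $\rho<\delta$ there is no intermediate region $t\in(\delta,\rho)$ to control, so no tail estimate is needed; sending $\rho\to 0^+$ and then $\varepsilon\to 0$ produces the stated limit. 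The only other point to watch is the strict positivity of the constant $(2r+n)/(2s+2r+n)$, which is exactly where the constraint $r>-n/2$ built into the definition of the decay character is used.
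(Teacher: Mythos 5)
This theorem is imported from Niche and Schonbek \cite{cesar_london}; the paper states it without proof, so there is no in-paper argument to compare against. Your proposal is, on its own terms, a correct and complete proof. The Fubini identity
\[
\int_{B(\rho)} |\xi|^{2s}|\widehat{\mathbf{v}_0}(\xi)|^2\,d\xi \;=\; \rho^{2s}F(\rho) - 2s\int_0^\rho t^{2s-1}F(t)\,dt
\]
is verified by swapping the order of integration over $\{0<t<|\xi|\le\rho\}$, the integral on the right converges near $0$ precisely because $2s+2r+n>0$, and your two-sided $\varepsilon$-argument legitimately converts $F(\rho)=(L+o(1))\rho^{2r+n}$ into the exact limit $P_{s+r}(\Lambda^s\mathbf{v}_0)=L\,(2r+n)/(2s+2r+n)\in(0,\infty)$, which pins down $r^*(\Lambda^s\mathbf{v}_0)=s+r^*(\mathbf{v}_0)$ and hence the stated range. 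This is in fact slightly stronger than what the usual argument in the literature delivers: there one typically gets the upper bound $P_{s+r}(\Lambda^s\mathbf{v}_0)_+\le P_r(\mathbf{v}_0)$ for free from $|\xi|^{2s}\le\rho^{2s}$ on $B(\rho)$, and obtains strict positivity of the lower indicator by restricting to an annulus $B(\rho)\setminus B(\rho/2)$, where $|\xi|^{2s}\ge(\rho/2)^{2s}$ and $F(\rho)-F(\rho/2)\sim L(1-2^{-(2r+n)})\rho^{2r+n}>0$; your identity instead computes the limit in closed form. The one caveat worth making explicit is that the statement, as used in this paper, presupposes that $r^*(\mathbf{v}_0)$ exists in the sense of Definition \ref{df-decay-character} (i.e.\ the limit $P_{r^*}(\mathbf{v}_0)$ exists and is finite and positive); your proof uses exactly this, which is consistent with the paper's conventions, but you should state it as a hypothesis rather than leave it implicit.
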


 \begin{remark} \label{remark-large-family}
 Note that the decay character can be explicitly computed in some important instances, for example $\mathbf{v}_0 \in L^2 \left( \mathbb{R}^n \right) \cap L^p \left( \mathbb{R}^n \right), 1 \leq p < 2$, see Ferreira, Niche and Planas \cite{Ferreira}. In these cases, we have $r^{\ast} (\mathbf{v}_0) = - n \left(1 - \frac{1}{p} \right)$.
 \end{remark}

In Definitions \ref{decay-indicator} and \ref{df-decay-character}, assumptions are made so as to ensure a positive $P_r (\mathbf{v}_0)$. However, these do not hold universally for all $\mathbf{v}_0 \in L^2 (\mathbb{R}^n)$. As demonstrated by Brandolese \cite{BrandoSIMA2016}, it is possible to construct initial data with strong oscillations near the origin such that the limit in Definition \ref{decay-indicator} fails to exist for certain values of $r$. Consequently, the decay character is not defined in such cases. He proposed a slightly more general definition of decay character than those in Definitions \ref{decay-indicator} and \ref{df-decay-character}, which we adopt in order to state Theorems \ref{Main1-Intro} and \ref{Main2-intro} and Corollary \ref{lower-bound}. 

\begin{definition}[Brandolese \cite{BrandoSIMA2016}]
	Let  $\mathbf{v}_0 \in L^2(\mathbb{R}^n)$ and $B_\rho=\{\xi\in \mathbb{R}^n\colon|\xi| \leq \rho\}$.
	The generalized lower and upper decay indicators of $v_0$ are
	\[
	P_r(\mathbf{v}_0)_- = \liminf_{\rho \to 0^+} \rho ^{-2r-n} \int _{B_\rho} |\widehat{\mathbf{v}_0} (\xi) |^2 \, d \xi
	\qquad
\] 
and
\[
	P_r(\mathbf{v}_0)_+ = \limsup_{\rho \to 0^+} \rho ^{-2r-n} \int _{B_\rho} |\widehat{\mathbf{v}_0} (\xi) |^2 \, d \xi,
	\]
respectively. When $P_r(\mathbf{v}_0)_-=P_r(\mathbf{v}_0)_+$, then we can define the {decay indicator} of~$\mathbf{v}_0$ as 
	$P_r(\mathbf{v}_0) = P_r(\mathbf{v}_0)_-=P_r(\mathbf{v}_0)_+$\,.
\end{definition}

\begin{definition}[Brandolese \cite{BrandoSIMA2016}]
\label{def-lorenzo}
	The {generalized upper and lower decay characters} of~$\mathbf{v}_0 \in L^2(\mathbb{R}^n)$ are respectively defined by
	\begin{align*}
		r^{\ast}_+ (\mathbf{v}_0) &=\sup\{r\in\mathbb{R}\colon P_r(\mathbf{v}_0)_+<\infty\},  \\
		r^{\ast} _- (\mathbf{v}_0) &=\inf\{r\in\mathbb{R}\colon P_r(\mathbf{v}_0)_->0\}.
	\end{align*}
\end{definition}

The decay character, as defined in Definition \ref{df-decay-character}, exists if and only if $r^{\ast}_+ (\mathbf{v}_0) = r^{\ast}_- (\mathbf{v}_0)$ in Definition \ref{def-lorenzo}. Alternatively, Brandolese \cite{BrandoSIMA2016} proved that the decay character $r^{\ast} (\mathbf{v}_0)$ exists if and only if $\mathbf{v}_0$ lies within a specific subset of a homogeneous Besov space, namely $\mathbf{v}_0 \in \dot{\mathcal{A}} ^{- \left( \frac{n}{2} + r^{\ast} \right)} _{2, \infty} \subset \dot{B} ^{- \left( \frac{n}{2} + r^{\ast} \right)} _{2, \infty}$. 

\subsection{The associated linear problem}

We shall consider here the following pseudo-parabolic equation in \(\mathbb{R}^3 \times (0, \infty)\):
\begin{equation}\label{plppe}
		\begin{cases}
    \partial_t (\overline{\mathbf{u}} - \alpha \Delta \overline{\mathbf{u}}) - \mu \Delta \overline{\mathbf{u}} = {\boldsymbol 0}, \\ \noalign{\smallskip}
    \operatorname{div} \overline{\mathbf{u}} = 0, \\ \noalign{\smallskip}
    \overline{\mathbf{u}}(0) = \mathbf{u}_0 \in H^1_\alpha(\mathbb{R}^3).
\end{cases} 
\end{equation}
A straightforward calculation gives the energy identity
\begin{equation}\label{energy_psudoLP15}
\frac{d}{dt} \bigl\|\overline{\mathbf{u}}(t)\bigr\|_{H^1_\alpha (\mathbb{R}^3)}^2  = -2 \mu \:\!\bigl\|\nabla \,\overline{\mathbf{u}} (t)\bigr\|_{L^2 (\mathbb{R}^3)}^2.    
\end{equation}
Since $\mathbf{u}_0 \in H^1_\alpha(\mathbb{R}^3)$, we have, in particular, that $\overline{\mathbf{u}} (t) \in L^2 (\mathbb{R}^3)$. Hence, the transform $\widehat{\overline{{\mathbf{u}}}}(t)$ is well defined. Taking the Fourier transform of \eqref{plppe}, we get 
\begin{equation*}
		\begin{cases}
(1 + \alpha |\xi|^2) \widehat{\overline{{\mathbf{u}}}}_t + \mu |\xi|^2 \widehat{\overline{{\mathbf{u}}}} = {\boldsymbol 0}, \\ \noalign{\smallskip}
i \xi \cdot \widehat{\overline{{\mathbf{u}}}}(\xi, t) = 0, \quad \xi \in \mathbb{R}^3, \quad t > 0, \\ \noalign{\smallskip}
\widehat{\overline{{\mathbf{u}}}}(0) = \widehat{\mathbf{u}_0}.
\end{cases}
\end{equation*}
So,
\[
\widehat{\overline{{\mathbf{u}}}}(\xi, t) = e^{-\frac{\mu |\xi|^2}{1 + \alpha |\xi|^2} t} \:\!\widehat{\overline{{\mathbf{u}}}}(\xi, 0) \equiv e^{t M_{\alpha, \mu}(\xi)} \:\!\widehat{\mathbf{u}_0}(\xi),
\]
where
\begin{equation}\label{eqn:deft_M}
M_{\alpha, \mu}(\xi):= - \frac{\mu|\xi|^2}{1+\alpha|\xi|^2}.    
\end{equation}
In particular:
\begin{equation}\label{8924lp}
    \bigl|\widehat{\overline{{\mathbf{u}}}}(\xi, t)\bigr| \leq \bigl|\widehat{\mathbf{u}_0}(\xi)\bigr|, \quad \,\,\forall \; \xi \in \mathbb{R}^3 \quad \text{and} \quad t \geq 0.
\end{equation}

Next, we characterize the decay of solutions to \eqref{plppe} in terms of the decay character $r^*(\mathbf{u}_0) = r^*$.

\begin{theorem}\label{teodeclupl158}
Let $\mathbf{u}_0 \in H_\alpha^1(\mathbb{R}^n)$, with $r^*(\mathbf{u}_0) = r^*$, that is, 
$$
	\label{myr}
	r^*(\mathbf{u}_0)=\max\{r\in\mathbb{R}\colon P_r(\mathbf{u}_0)_+<\infty\}=\min\{r\in\mathbb{R}\colon P_r(\mathbf{u}_0)_->0\}.
$$
If $-\frac{n}{2} < r^* < \infty$, then for the solution $\overline{{\mathbf{u}}}(t)$ of \eqref{plppe} with initial data $\mathbf{u}_0$ we have that there exist constants $C_1$, $C_2$, $C_3 > 0$ such that
\[
C_1(t+1)^{-\left(\frac{n}{2} + r^{*}\right)} \leq \bigl\|\overline{\mathbf{u}}(t)\bigr\|_{H^1_\alpha(\mathbb{R}^n)}^2 \leq C_2(t+C_3)^{-\left(\frac{n}{2} + r^{*}\right)}, \quad \,\,\forall \; t \geq 0.
\]
\end{theorem}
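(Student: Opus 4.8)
The plan is to pass to the Fourier side, where \eqref{8924lp} and the explicit formula for $\widehat{\overline{\mathbf{u}}}$ represent the solution, and to reduce the weighted norm to a radial integral against the distribution function of $|\widehat{\mathbf{u}_0}|^2$. By Plancherel's theorem together with the definition \eqref{eqn:norma-h1} of the $H^1_\alpha$ norm and the symbol \eqref{eqn:deft_M},
$$\bigl\|\overline{\mathbf{u}}(t)\bigr\|_{H^1_\alpha(\mathbb{R}^n)}^2 = (2\pi)^{-n}\int_{\mathbb{R}^n}\bigl(1+\alpha|\xi|^2\bigr)\,e^{2tM_{\alpha,\mu}(\xi)}\,\bigl|\widehat{\mathbf{u}_0}(\xi)\bigr|^2\,d\xi,$$
so the whole statement reduces to bounding this integral from above and below by multiples of $t^{-(n/2+r^*)}$. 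First I would split $\mathbb{R}^n=B(\rho_0)\cup B(\rho_0)^c$ at a fixed radius $\rho_0>0$ and record the elementary monotonicity facts for the symbol: since $s\mapsto s/(1+\alpha s)$ is increasing, on $B(\rho_0)$ the quantity $\frac{|\xi|^2}{1+\alpha|\xi|^2}$ is comparable to $|\xi|^2$, whereas on $B(\rho_0)^c$ it is bounded below by $\delta_0:=\frac{\rho_0^2}{1+\alpha\rho_0^2}>0$; the weight $1+\alpha|\xi|^2$ is itself squeezed between positive constants on $B(\rho_0)$.

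The high-frequency part is harmless: on $B(\rho_0)^c$ the uniform bound $e^{2tM_{\alpha,\mu}(\xi)}\le e^{-2\mu\delta_0 t}$ yields a contribution of size at most $e^{-2\mu\delta_0 t}\,\|\mathbf{u}_0\|_{H^1_\alpha}^2$, which decays exponentially. This reflects the structural difference from the heat semigroup, namely that the pseudo-parabolic symbol \eqref{eqn:deft_M} saturates at infinity; the upshot is that the high frequencies are dominated by any inverse power of $t$ and only affect the constants $C_2$ and $C_3$.

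For the low-frequency part, replacing $\frac{|\xi|^2}{1+\alpha|\xi|^2}$ by its upper or lower comparison with $|\xi|^2$ and using the two-sided bound on the weight reduces matters to the Gaussian-type integral $\int_{B(\rho_0)}e^{-ct|\xi|^2}\,|\widehat{\mathbf{u}_0}(\xi)|^2\,d\xi$ for a suitable $c>0$ (distinct for the two bounds). Setting $g(\rho):=\int_{B(\rho)}|\widehat{\mathbf{u}_0}(\xi)|^2\,d\xi$ and integrating by parts in $\rho$, this equals $e^{-ct\rho_0^2}g(\rho_0)+\int_0^{\rho_0}2ct\rho\,e^{-ct\rho^2}g(\rho)\,d\rho$, whose boundary term is again exponentially small. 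Because $r^*(\mathbf{u}_0)=r^*$ exists, that is $r^*_+=r^*_-$, the indicator $P_{r^*}(\mathbf{u}_0)=\lim_{\rho\to0^+}\rho^{-(n+2r^*)}g(\rho)$ is positive and finite, so that $c_\ast\rho^{n+2r^*}\le g(\rho)\le C_\ast\rho^{n+2r^*}$ for $0<\rho\le\rho_0$ after possibly shrinking $\rho_0$. Inserting these bounds and rescaling $\rho=s/\sqrt{ct}$ transforms the radial integral, up to constants, into $(ct)^{-(n/2+r^*)}\int_0^{\sqrt{ct}\rho_0}s^{n+2r^*+1}e^{-s^2}\,ds$, and the hypothesis $r^*>-n/2$ guarantees that this $s$-integral converges to a positive finite limit as $t\to\infty$; this is exactly what produces the rate $t^{-(n/2+r^*)}$ with matching positive constants above and below.

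Finally I would combine the two regimes: for large $t$ the polynomial low-frequency term dominates the exponential high-frequency one, giving $\|\overline{\mathbf{u}}(t)\|_{H^1_\alpha}^2\approx t^{-(n/2+r^*)}$, and to obtain bounds valid for every $t\ge0$ I would absorb the bounded transient regime, where the norm is comparable to $\|\mathbf{u}_0\|_{H^1_\alpha}^2>0$, by replacing $t$ with $t+1$ in the lower bound and with $t+C_3$ in the upper bound for an appropriate $C_3$. The main obstacle is the low-frequency estimate: one must extract the sharp exponent with explicit, matching upper and lower constants from only the two-sided control $g(\rho)\approx\rho^{n+2r^*}$, and verify that the rescaled $s$-integral stays bounded away from $0$ and $\infty$ uniformly for $t\ge t_0$ — this is precisely where $r^*>-n/2$, i.e. integrability of $s^{n+2r^*+1}$ at the origin, is used. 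By comparison, the saturation of $M_{\alpha,\mu}$ at high frequencies and the weight $1+\alpha|\xi|^2$ are routine.
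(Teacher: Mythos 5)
Your argument is correct, and for the upper bound it takes a genuinely different route from the paper. The paper proves the lower bound much as you do (restricting to a shrinking ball $|\xi|\le\rho_0(t+1)^{-1/2}$ and using positivity of the decay indicator there), but for the upper bound it runs the Fourier splitting machinery: multiply the energy identity \eqref{energy_psudoLP15} by $g(t)=b^{-k}(t+b)^k$, discard the high frequencies $|\xi|>\rho(t)=\bigl(g'(t)/(2\mu g(t)-\alpha g'(t))\bigr)^{1/2}$ through the resulting differential inequality, and bound the remaining ball integral via \eqref{8924lp} together with the decay indicators of both $\mathbf{u}_0$ and $\nabla\mathbf{u}_0$. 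You instead estimate the explicit representation $e^{tM_{\alpha,\mu}(\xi)}\widehat{\mathbf{u}_0}(\xi)$ directly, splitting at a \emph{fixed} radius: the saturation of the symbol kills the high frequencies exponentially, and the low-frequency Gaussian-type integral is evaluated by Stieltjes integration by parts against $g(\rho)=\int_{B(\rho)}|\widehat{\mathbf{u}_0}|^2\,d\xi$ followed by the rescaling $s=\sqrt{ct}\,\rho$. Your route is more elementary, treats the two bounds symmetrically, and needs only the two-sided control of $g(\rho)$ (the weight $1+\alpha|\xi|^2$ is absorbed into constants on the fixed ball, so no separate indicator for $\nabla\mathbf{u}_0$ is required); the paper's Fourier-splitting formulation is the one that survives the passage to the nonlinear equation in Section 3, which is presumably why the authors set up the linear case that way. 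Two small points: under the stated hypothesis you only get $0<P_{r^*}(\mathbf{u}_0)_-\le P_{r^*}(\mathbf{u}_0)_+<\infty$ rather than an actual limit, but the two-sided bound $c_*\rho^{n+2r^*}\le g(\rho)\le C_*\rho^{n+2r^*}$ for small $\rho$ is exactly what that gives and is all you use; and convergence of $\int_0 s^{n+2r^*+1}e^{-s^2}\,ds$ at the origin needs only $r^*>-\tfrac{n}{2}-1$, the stronger assumption $r^*>-\tfrac{n}{2}$ being forced anyway by boundedness of $g$.
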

\begin{remark}
	The proof of this result follows the approach of C. Niche in \cite{Niche-Voigt}. Moreover, with minor adjustments to our argument (following the ideas of L. Brandolese in \cite{BrandoSIMA2016}), it is possible to establish the following result:
		\begin{itemize}
			\item[-]
			If $P_r(\mathbf{u}_0)_->0$ then there is a constant $C_1>0$ such that, for all $t>0$,
			\[
			C_1(1+t)^{-\frac{1}{\alpha}(r+n/2)}\le \|\mathbf{\bar{u}}(t)\|_2^2.
			\phantom{\le C_2(1+t)^{-\frac1\alpha(r+n/2)}}.
			\]
			\item[-]
			If $P_r(\mathbf{u}_0)_+<\infty$, then there is a constant $C_2>0$ such that, for all $t>0$,
			\[
			\phantom{C_1(1+t)^{-\frac{1}{\alpha}(r+n/2)}\le}
			\|\mathbf{\bar{u}}(t)\|_2^2\le C_2(1+t)^{-\frac1\alpha(r+n/2)}.
			\]
		\end{itemize}
\end{remark}	
\begin{proof} 
Initially, note, by Theorem \ref{cnmesl15}, that \( r^*(\nabla \mathbf{u}_0) = 1 + r^*(\mathbf{u}_0) \), with \( r^*(\mathbf{u}_0) = r^* \in \left(\frac{n}{2}, \infty\right) \). Since \( P_r(\mathbf{u}_0) > 0 \), it follows from the definition of the limit that there exist sufficiently small \(\rho_0 > 0\) and \(c_1 > 0\) such that for \(0<\rho < \rho_0\), 
\[
c_1 < \rho^{-2r-n} \int_{B(\rho)} |\widehat{\mathbf{u}_0}(\xi)|^2 \,d\xi, \quad \text{ and } \quad c_1 < \rho^{-2(r+1)-n} \int_{B(\rho)} |\xi|^2 |\widehat{\mathbf{u}_0}(\xi)|^2 \,d\xi.
\]
Note that $\rho_0$ and $c_1$ depend on $P_r(\mathbf{u}_0)$. From now on, we will denote \( B(t) := \left\{ \xi \in \mathbb{R}^n \,;\, |\xi| \leq \rho(t) \right\} \), for some continuous, positive and decreasing \( \rho(t) \) to be determined later. Thus:
\begin{eqnarray*}
&\displaystyle  (2\pi)^n \|\overline{\mathbf{u}}(t)\|_{H^1_\alpha(\mathbb{R}^n)}^2 \geq \int_{B(t)} (1 + \alpha |\xi|^2) e^{2t M_{\alpha,\mu}(\xi)} |\widehat{\mathbf{u}_0}(\xi)|^2 \,d\xi&   \\ \noalign{\smallskip}  &\displaystyle  \geq \int_{B(t)} (1 + \alpha |\xi|^2) e^{-2 \mu |\xi|^2 t} |\widehat{\mathbf{u}_0}(\xi)|^2 \,d\xi& \\ \noalign{\smallskip} &\displaystyle  \geq e^{-2\mu t \rho(t)^2} \int_{B(t)} |\widehat{\mathbf{u}_0}(\xi)|^2 \,d\xi + \alpha e^{-2\mu t \rho(t)^2} \int_{B(t)} |\xi|^2 |\widehat{\mathbf{u}_0}(\xi)|^2 \,d\xi& \\ \noalign{\smallskip} &\displaystyle  \geq c_1 \rho(t)^{2r+n} \:\!e^{-2\mu t \rho(t)^2} + \alpha \:\!c_1 \rho(t)^{2(r+1)+n} \:\!e^{-2\mu t \rho(t)^2}.&
\end{eqnarray*}
Choosing \(\rho(t) := \rho_0 (t + 1)^{-\frac{1}{2}}\), we obtain
\[
(2\pi)^n \|\overline{\mathbf{u}}(t)\|_{H^1_\alpha(\mathbb{R}^n)}^2 \geq c_1 e^{-2\mu \rho_0^2} [\rho_{0}^{2r+n} (t + 1)^{-(r+\frac{n}{2})} + \alpha \rho_{0}^{2(r+1)+n} (t + 1)^{-(r+1+\frac{n}{2})}],
\]
which shows that
\[
\|\overline{\mathbf{u}}(t)\|_{H^1_\alpha(\mathbb{R}^n)}^2 \geq C_1 (t + 1)^{-(r + \frac{n}{2})}, \quad \,\,\forall \; t \geq 0.
\]
Here, \( C_1 = C_1(\mu, \alpha, r, n, P_r(\mathbf{u}_0)) \in \mathbb{R}^+\). 

Now, to derive the upper estimate, we use the Schonbek's method, known in the literature as ``Fourier splitting'' technique (see \cite{schonbek1, schonbek3}). For this, we define the closed ball centered at the origin with radius \(\rho(t)\):
\[
B(t) := \left\{ \xi \in \mathbb{R}^n \,; \, |\xi| \leq \rho(t) \equiv \left( \frac{g'(t)}{2 \mu g(t) - \alpha g'(t)} \right)^{\frac{1}{2}} \right\}\!,
\]
where \(g\) is a differentiable function on \((0, \infty)\) satisfying
\[
g(0) = 1, \quad g(t) \geq 1, \quad g'(t) > 0, \quad \text{and} \quad 2 \mu g(t) > \alpha g'(t), \quad \,\,\forall \; t  > 0.
\]
Using Plancherel's theorem, it follows from the energy identity \eqref{energy_psudoLP15} that
$$
\frac{d}{dt} \!\left[\:\! \int_{\mathbb{R}^n} \left( 1 + \alpha |\xi|^2 \right) \!|\widehat{\overline{\mathbf{u}}}(\xi,t)|^2 \, d\xi \right] = - 2\mu \int_{\mathbb{R}^n} |\xi|^2 |\widehat{\overline{\mathbf{u}}}(\xi,t)|^2 \, d\xi.
$$
Multiplying the above identity by \(g(t)\), we obtain
\begin{eqnarray*}
    \frac{d}{dt} \!\left[ g(t) \int_{\mathbb{R}^n} (1 + \alpha |\xi|^2) |\widehat{\overline{\mathbf{u}}}|^2 \, d\xi \right] &=& g'(t) \int_{\mathbb{R}^n} (1 + \alpha |\xi|^2) |\widehat{\overline{\mathbf{u}}}|^2 \, d\xi \\ \noalign{\medskip} & & - \, 2 \mu g(t) \int_{\mathbb{R}^n} \frac{|\xi|^2 }{1 + \alpha |\xi|^2} (1 + \alpha |\xi|^2) |\widehat{\overline{\mathbf{u}}}|^2 \, d\xi.
\end{eqnarray*}
From the definition of \(B(t)\), we have
\[
\frac{2 \mu g(t) |\xi|^2}{1 + \alpha |\xi|^2} > g'(t), \quad \,\,\forall \; \xi \in B(t)^c \equiv \mathbb{R}^{n} \, \backslash \, B(t).
\]
Thus, by \eqref{8924lp}, we have
\begin{eqnarray*}
   & &  \hspace{-0.4cm} \frac{d}{dt} \!\left[ g(t) \int_{\mathbb{R}^n} (1 + \alpha |\xi|^2) |\widehat{\overline{\mathbf{u}}}(\xi, t)|^2 \, d\xi \right] \\ \noalign{\smallskip} & & \leq g'(t) \int_{\mathbb{R}^n} (1 + \alpha |\xi|^2) |\widehat{\overline{\mathbf{u}}}(\xi, t)|^2 \, d\xi  \\&& -  2 \mu g(t) \int_{B(t)^c} \frac{|\xi|^2}{1 + \alpha |\xi|^2} (1 + \alpha |\xi|^2) |\widehat{\overline{\mathbf{u}}}(\xi, t)|^2 \, d\xi \\  & & \leq g'(t) \int_{B(t)} (1 + \alpha |\xi|^2) |\widehat{\overline{\mathbf{u}}}(\xi, t)|^2 \, d\xi \\ \noalign{\smallskip} & &
   \leq g'(t) \int_{B(t)} |\widehat{\mathbf{u}_0}(\xi)|^2 \,d\xi + g'(t) \:\!\alpha \int_{B(t)} |\xi|^2 |\widehat{\mathbf{u}_0}(\xi)|^2 \,d\xi.
\end{eqnarray*}
Since \(0< P_r(\mathbf{u}_0) < \infty \) and \( r^*(\nabla \mathbf{u}_0) = 1 + r^*(\mathbf{u}_0) \), it follows from the limit definition that there exist \(\rho_0 > 0\) sufficiently small and \(c_2 > 0\) such that for \(0<\rho < \rho_0\),
\[
\rho^{-2r-n} \int_{B(t)} |\widehat{\mathbf{u}_0}(\xi)|^2 \,d\xi \leq c_{2}, \quad \text{ and } \quad \rho^{-2(r+1)-n} \int_{B(t)} |\xi|^2 |\widehat{\mathbf{u}_0}(\xi)|^2 \,d\xi \leq c_2.
\]
Here, \(\rho_0\) and \(c_2\) depend on \( P_r(\mathbf{u}_0) \). Thus:
\[
\frac{d}{dt} \!\left[ g(t) \int_{\mathbb{R}^n} \left( 1 + \alpha |\xi|^2 \right) |\widehat{\overline{\mathbf{u}}}(\xi, t)|^2 \,d\xi \right] \leq c_2 \:\!g'(t) \rho(t)^{2r+n} + \alpha \:\!c_2 \:\!g'(t) \rho(t)^{2(r+1)+n}.
\]
Taking \( g(t) = b^{-k} (t + b)^k \), where \( k > 1 + r + \frac{n}{2} \) and \( b > \frac{k \alpha}{2\mu} \), we have \( g(0) = 1 \), \( g(t) \geq 1 \), \( g'(t) > 0 \), \(2\mu g(t) > \alpha g'(t)\), and \(\rho(t) = (\frac{k}{2 \mu})^{\frac{1}{2}} (t + a)^{-\frac{1}{2}}\), where \(a = b - \frac{k \alpha}{2\mu}\) ($0 < a < b$). Therefore:
\begin{eqnarray*}
    g(t) \cdot (2\pi)^n \|\overline{\mathbf{u}}(t)\|_{H^1_\alpha(\mathbb{R}^n)}^2 &\leq& g(0) \cdot (2\pi)^n \|\mathbf{u}_0\|_{H^1_\alpha(\mathbb{R}^n)}^2 
+ c_2 \int_0^t g'(s) \rho(s)^{2r+n} \, ds \\ &+ & \alpha \:\!c_2 \int_0^t g'(s) \rho(s)^{2(r+1)+n} \, ds \\ &\leq& C + C(t+b)^{k-(r+\frac{n}{2})} + C(t+b)^{k-(r+1+\frac{n}{2})},
\end{eqnarray*}
where \( C = C(\|\mathbf{u}_0\|_{H^1_\alpha(\mathbb{R}^n)}, \alpha, c_2, \mu, a, b, k, r, n) > 0 \), and hence
\begin{eqnarray*}
 \|\overline{\mathbf{u}}(t)\|_{H^1_\alpha(\mathbb{R}^n)}^2 &\leq& C(t+b)^{-k} + C(t+b)^{-(r+\frac{n}{2})} + C(t+b)^{-(r+1+\frac{n}{2})}   \\ \noalign{\smallskip} &\leq& C(t+b)^{-(r+\frac{n}{2})}, 
\end{eqnarray*}
for all $t \geq 0$. So,
\[
\bigl\|\overline{\mathbf{u}}(t)\bigr\|_{H^1_\alpha(\mathbb{R}^n)}^2 \leq C_2(t+C_3)^{-\left(\frac{n}{2} + r^{*}\right)}, \quad \,\,\forall \; t \geq 0.
\]
Therefore, we conclude that if \( -\frac{n}{2} < r^* < \infty \), then there exist constants $C_1$, $C_2$, $C_3 > 0$ such that
\[
C_1(t+1)^{-\left(\frac{n}{2} + r^{*}\right)} \leq \bigl\|\overline{\mathbf{u}}(t)\bigr\|_{H^1_\alpha(\mathbb{R}^n)}^2 \leq C_2(t+C_3)^{-\left(\frac{n}{2} + r^{*}\right)}, \quad \,\,\forall \; t \geq 0,
\]
where \(C_1 = C_1(P_r(\mathbf{u}_0), \alpha, \mu, r^{*}, n) \), \( C_2 = C_2(P_r(\mathbf{u}_0), \alpha, \mu, r^*, n, \|\mathbf{u}_0\|_{H^1_\alpha(\mathbb{R}^n)}) \), and \(C_3 = C_3(\alpha, \mu, r^*, n) \). This proves the theorem. 
\end{proof}

\section{Decay characterization of solutions}
\label{proof-Main1} We start with the proof of a particularly useful lemma.

\begin{lemma}\label{Lema1}
Let \(\mathbf{u}\) be a weak solution to \eqref{eqn:1}. Then for all \(t \geq 0\) and \(\xi \in \mathbb{R}^3\) we have
\begin{equation*}
\left| \mathcal{F} \left\{ \operatorname{rot} (\mathbf{u} - \alpha \Delta \mathbf{u}) \times \mathbf{u} \right\} \!(\xi, t) \right| 
\leq C \!\left(|\xi| + |\xi|^{2}\right)\! \bigl\|\mathbf{u}(t)\bigr\|^{2}_{H^1_\alpha (\mathbb{R}^3)}, 
\end{equation*}
where the constant $C > 0$ depends only on $\alpha$. In particular,
\[
\left| \mathcal{F} \left\{ \operatorname{rot} (\mathbf{u} - \alpha \Delta \mathbf{u}) \times \mathbf{u} \right\} \!(\xi, t) \right| \leq C (|\xi| + |\xi|^2),
\]
where \(C > 0\) depends only on \(\alpha\) and $\|\mathbf{u}_0\|_{H^1_\alpha(\mathbb{R}^3)}$.
\end{lemma}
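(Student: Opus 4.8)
The plan is to work on the Fourier side and bound the resulting convolution pointwise in $\xi$, using incompressibility to manufacture the prefactor $|\xi|+|\xi|^2$. For a.e.\ fixed $t$ the solution $\mathbf{u}(t)\in V_2(\mathbb{R}^3)\subset H^3(\mathbb{R}^3)$, which makes every product and convolution below absolutely convergent. Writing $v=\mathbf{u}-\alpha\Delta\mathbf{u}$, one has $\widehat{v}(\xi)=(1+\alpha|\xi|^2)\widehat{\mathbf{u}}(\xi)$ and $\mathcal{F}\{\operatorname{rot} v\}(\eta)=i(1+\alpha|\eta|^2)\,\eta\times\widehat{\mathbf{u}}(\eta)$, so that
\[
\mathcal{F}\{\operatorname{rot} v\times\mathbf{u}\}(\xi)=\frac{i}{(2\pi)^3}\int_{\mathbb{R}^3}(1+\alpha|\eta|^2)\,\bigl[\eta\times\widehat{\mathbf{u}}(\eta)\bigr]\times\widehat{\mathbf{u}}(\xi-\eta)\,d\eta .
\]

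I would then apply the triple product identity $(\eta\times a)\times b=(\eta\cdot b)\,a-(a\cdot b)\,\eta$ with $a=\widehat{\mathbf{u}}(\eta)$ and $b=\widehat{\mathbf{u}}(\xi-\eta)$, splitting the integrand into two pieces. In the first, the scalar $\eta\cdot\widehat{\mathbf{u}}(\xi-\eta)$ may be replaced by $\xi\cdot\widehat{\mathbf{u}}(\xi-\eta)$, since $\operatorname{div}\mathbf{u}=0$ gives $(\xi-\eta)\cdot\widehat{\mathbf{u}}(\xi-\eta)=0$; this produces an explicit factor $|\xi|$. The second piece carries the free vector $\eta$ together with the weight $(1+\alpha|\eta|^2)$, both anchored at the frequency $\eta$, and admits no such cancellation — this is the crux. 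To extract the missing $|\xi|$ I would write $\eta=\xi-(\xi-\eta)$: the $\xi$-contribution is of the same benign type as the first piece, while the remainder $\int(\xi-\eta)(1+\alpha|\eta|^2)\,\widehat{\mathbf{u}}(\eta)\cdot\widehat{\mathbf{u}}(\xi-\eta)\,d\eta$ I would symmetrize under $\eta\mapsto\xi-\eta$. Averaging the two representations turns the vector weight into $\tfrac12[(\xi-\eta)(1+\alpha|\eta|^2)+\eta(1+\alpha|\xi-\eta|^2)]$, whose linear part collapses to $\tfrac12\xi$ and whose cubic part reduces to $\tfrac{\alpha}{2}\bigl(\xi|\eta|^2+|\xi|^2\eta-2(\xi\cdot\eta)\eta\bigr)$; crucially, no pure $|\xi|^3$ monomial survives this peel-then-symmetrize ordering.

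With a factor $|\xi|$ secured in every term, it remains to absorb the residual weights into two copies of the $H^1_\alpha$ norm, via the convolution bound $\int f(\xi-\eta)g(\eta)\,d\eta\le\|f\|_{L^2}\|g\|_{L^2}$, Plancherel identifying $\|(1+\alpha|\cdot|^2)^{1/2}\widehat{\mathbf{u}}\|_{L^2}$ with $(2\pi)^{3/2}\|\mathbf{u}\|_{H^1_\alpha}$. The elementary inequality $1+\alpha|\eta|^2\le 2(1+\alpha|\xi|^2)(1+\alpha|\xi-\eta|^2)$ lets me peel one factor $(1+\alpha|\xi|^2)^{1/2}$ off any surplus degree-two weight — this combines with the extracted $|\xi|$ to give $|\xi|(1+\alpha|\xi|^2)^{1/2}\le C_\alpha(|\xi|+|\xi|^2)$ — while the two remaining degree-one weights $(1+\alpha|\xi-\eta|^2)^{1/2}$ and $(1+\alpha|\eta|^2)^{1/2}$ attach one to each $\widehat{\mathbf{u}}$ and are consumed by the two $H^1_\alpha$ norms. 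For the cubic remainder the only subtlety is to split the concentrated factor $|\eta|^2\le|\eta|\,(|\xi|+|\xi-\eta|)$, so that each surviving $\widehat{\mathbf{u}}$ carries weight at most $(1+\alpha|\cdot|^2)^{1/2}$; this is exactly what keeps the estimate inside the $H^1_\alpha\times H^1_\alpha$ budget and forbids a spurious $|\xi|^3$. Collecting the pieces yields $\bigl|\mathcal{F}\{\operatorname{rot} v\times\mathbf{u}\}(\xi)\bigr|\le C(|\xi|+|\xi|^2)\|\mathbf{u}(t)\|^2_{H^1_\alpha}$ with $C=C(\alpha)$.

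The final assertion is then immediate from the energy identity \eqref{int_energy_equa}: since $\frac{d}{dt}\|\mathbf{u}(t)\|^2_{H^1_\alpha}=-2\mu\|\nabla\mathbf{u}(t)\|^2_{L^2}\le 0$, the $H^1_\alpha$ norm is non-increasing, whence $\|\mathbf{u}(t)\|_{H^1_\alpha}\le\|\mathbf{u}_0\|_{H^1_\alpha}$ for all $t\ge 0$, and $C$ may be taken to depend only on $\alpha$ and $\|\mathbf{u}_0\|_{H^1_\alpha}$. The step I expect to demand the most care is the resonant second piece: organizing the symmetrization so that a genuine factor $|\xi|$ is gained, and then distributing the quadratic weight so that the bound uses only the $H^1_\alpha$ norm rather than $H^2$ or $H^3$, is where the argument is genuinely delicate.
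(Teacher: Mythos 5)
Your argument is correct, and it reaches the stated bound by a genuinely different route from the paper. The paper stays in physical space: it first splits $\operatorname{rot}(\mathbf{u}-\alpha\Delta\mathbf{u})\times\mathbf{u}=(\mathbf{u}\cdot\nabla)\mathbf{u}-\nabla\bigl(|\mathbf{u}|^2/2\bigr)-\alpha\operatorname{rot}(\Delta\mathbf{u})\times\mathbf{u}$, handles the first two terms by the trivial bound $|\mathcal{F}\{f\}(\xi)|\le\|f\|_{L^1}$ together with the divergence structure, and then writes $[\operatorname{rot}(\Delta\mathbf{u})\times\mathbf{u}]_j$ in components and integrates by parts in $x$ (using $\operatorname{div}\mathbf{u}=0$) until every copy of $\mathbf{u}$ carries at most one derivative, the excess derivatives falling on $e^{-i\xi\cdot x}$ and becoming powers of $|\xi|$; the resulting $L^1$ norms of products are then estimated by Cauchy--Schwarz as $\|\mathbf{u}\|_{L^2}\|\nabla\mathbf{u}\|_{L^2}$ and $\|\nabla\mathbf{u}\|_{L^2}^2$. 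Your proof is the frequency-side dual of this: the convolution representation plus the triple-product identity replaces the component-wise vector calculus, and your symmetrization $\eta\mapsto\xi-\eta$ combined with the peeling $\eta=\xi-(\xi-\eta)$ plays exactly the role of the paper's integration by parts, redistributing derivatives so that each $\widehat{\mathbf{u}}$ carries at most the weight $(1+\alpha|\cdot|^2)^{1/2}$. Your version makes the key structural point — why no $|\xi|^3$ monomial can appear and why $H^1_\alpha\times H^1_\alpha$ suffices — more transparent, at the cost of more bookkeeping with convolution constants and the weight-splitting inequality $1+\alpha|\eta|^2\le 2(1+\alpha|\xi|^2)(1+\alpha|\xi-\eta|^2)$; the paper's version is more elementary and yields slightly more explicit constants (e.g.\ the coefficient $\tfrac{5\alpha}{2}$ on the $|\xi|\|\nabla\mathbf{u}\|_{L^2}^2$ term). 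Your derivation of the second assertion from the monotonicity of $\|\mathbf{u}(t)\|_{H^1_\alpha}$ under the energy identity coincides with the paper's.
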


\begin{proof}
    Multiplying \eqref{eqn:1} by $\mathbf{u}$ in $L^2$ and using the incompressibility condition, we obtain the energy identity
\begin{equation}\label{id_energy}
\frac{d}{dt} \|\mathbf{u}\|_{H^1_\alpha (\mathbb{R}^3)}^2 = -2 \mu \|\nabla \mathbf{u}\|_{L^2 (\mathbb{R}^3)}^2.
\end{equation}
As a consequence,
\begin{equation}\label{energy}
    \|\mathbf{u}(t)\|_{H^1_\alpha (\mathbb{R}^3)} \leq \|\mathbf{u}_0\|_{H^1_\alpha (\mathbb{R}^3)} \leq \|\mathbf{u}_0\|_{H^3 (\mathbb{R}^3)}, \quad \,\,\forall \; t \geq 0.
\end{equation}
Note that, from \eqref{energy}, we have \(\mathbf{u}(t) \in L^2(\mathbb{R}^3)\) for all \(t \geq 0\), hence \(\widehat{\mathbf{u}}(t)\) is well-defined. Using standard vector identities we obtain 
\[
\operatorname{rot}(\mathbf{u} - \alpha \Delta \mathbf{u}) \times \mathbf{u} = (\mathbf{u} \cdot \nabla) \mathbf{u} - \nabla \!\left( \frac{|\mathbf{u}|^2}{2} \right) - \alpha \operatorname{rot}(\Delta \mathbf{u}) \times \mathbf{u}.
\]
Using that $\operatorname{div} \mathbf{u} = 0$ and integrating by parts, we find
\begin{align}
  \left| \mathcal{F} \left\{ (\mathbf{u} \cdot \nabla) \mathbf{u} \right\} (\xi, t) \right| & = \left| \mathcal{F} \left\{ \operatorname{div} (\mathbf{u} \otimes \mathbf{u}) \right\} (\xi, t) \right| = \left| i \xi \cdot \mathcal{F} \{\mathbf{u} \otimes \mathbf{u}\} \right| \nonumber \\  &\leq \|\mathbf{u} \otimes \mathbf{u}\|_{L^1} |\xi| \leq \|\mathbf{u}\|_{L^2}^2 |\xi|, \label{enl1}
\end{align}
and
\begin{equation}
\left| \mathcal{F} \left\{ \nabla \!\left( \frac{|\mathbf{u}|^2}{2} \right) \right\} (\xi, t) \right| = \left| i \mathcal{F} \left\{ \frac{|\mathbf{u}|^2}{2} \right\} \xi \right|  \leq  \frac{1}{2} \||\mathbf{u}|^2\|_{L^1} |\xi| = \frac{1}{2} \|\mathbf{u}\|_{L^2}^2 |\xi|. \label{enl2}
\end{equation}
Moreover, we have that
\begin{align}
   [\operatorname{rot} (\Delta \mathbf{u}) \times \mathbf{u}]_j & = (\mathbf{u} \cdot \nabla) \Delta \mathbf{u}_j - \mathbf{u} \cdot D_j \Delta \mathbf{u}  \nonumber \\ & = \sum_{k, l = 1}^3 (D_l^2 D_k u_j) u_k - \sum_{k, l = 1}^3 (D_l^2 D_j u_k) u_{k}.
\end{align}
Therefore
\begin{eqnarray*}
\mathcal{F} \left\{ [\operatorname{rot} (\Delta \mathbf{u}) \times \mathbf{u}]_j \right\} (\xi, t) &=& \sum_{k, l = 1}^3 \int_{\mathbb{R}^3} e^{-i \xi \cdot x} (D_l^2 D_j u_k) u_k \,dx \\ &  - & \,\sum_{k, l = 1}^3 \int_{\mathbb{R}^3} e^{-i \xi \cdot x} (D_l^2 D_j u_k) u_k \,dx =: \mathcal{I}_1 + \mathcal{I}_{2}.
\end{eqnarray*}
It is easy to deduce through integration by parts that

\begin{align*}
\mathcal{I}_1 & = i \sum_{k,l=1}^3 \xi_k \int_{\mathbb{R}^3} e^{-i \xi \cdot x} u_k D_l^2 u_j \, dx =  - \sum_{k,l=1}^3 \xi_k \xi_l \int_{\mathbb{R}^3} e^{-i \xi \cdot x} u_k D_l u_j \, dx \\&  - i \sum_{k,l=1}^3 \xi_k \int_{\mathbb{R}^3} e^{-i \xi \cdot x}  D_l u_k D_l u_j \, dx,
\end{align*}
and
\begin{align*}
\mathcal{I}_2 & = -i \sum_{k,l=1}^3 \xi_l \int_{\mathbb{R}^3} e^{-i \xi \cdot x} u_k D_l D_j u_k \, dx + \sum_{k,l=1}^3 \int_{\mathbb{R}^3} e^{-i \xi \cdot x} D_l u_k D_l D_j u_k \, dx \\ & = -i \sum_{k,l=1}^3  \xi_l \int_{\mathbb{R}^3} e^{-i \xi \cdot x} u_k D_l D_j u_k \, dx + \frac{1}{2} \sum_{k,l=1}^3 \int_{\mathbb{R}^3} e^{-i \xi \cdot x} D_j[(D_l u_k)^2] \, dx \\ &  = \sum_{k,l=1}^3 \xi_l \xi_j \int_{\mathbb{R}^3} e^{-i \xi \cdot x} u_k D_l u_k \, dx + i \sum_{k,l=1}^3 \xi_l \int_{\mathbb{R}^3} e^{-i \xi \cdot x} D_j u_k D_l u_k \, dx \} \\ & + \frac{i}{2} \sum_{k,l=1}^3 \xi_j \int_{\mathbb{R}^3} e^{-i \xi \cdot x} (D_l u_k)^2 \, dx.
\end{align*}
Then we have
\begin{align*}
 |  \mathcal{F}  & ( (\operatorname{rot} (\Delta \mathbf{u})  \times \mathbf{u} ) _j  ) |  (\xi, t)  \leq \left| \mathcal{I}_{1} \right| + \left| \mathcal{I}_{2} \right| \\ & \leq   \sum_{k, l = 1}^3 |\xi_k| |\xi_l| \int_{\mathbb{R}^3} |u_k| |D_l u_j| \, dx + \sum_{k, l = 1}^3 |\xi_k| \int_{\mathbb{R}^3} |D_l u_k| |D_l u_j| \, dx \\ & + \, \sum_{k, l = 1}^3 |\xi_l| |\xi_j| \int_{\mathbb{R}^3} |u_k| |D_l u_k| \, dx + \sum_{k, l = 1}^3 |\xi_l| \int_{\mathbb{R}^3} |D_j u_k| |D_l u_k| \, dx \\ & + \,  \frac{1}{2} \sum_{k, l = 1}^3 |\xi_j| \int_{\mathbb{R}^3} |D_l u_k|^2 \, dx,
\end{align*}
and thus
\begin{eqnarray}
    \left| \mathcal{F} \left\{ \alpha \operatorname{rot} (\Delta \mathbf{u}) \times \mathbf{u} \right\} \!(\xi, t) \right| &\leq& 2 \alpha |\xi|^2 \|\mathbf{u}\|_{L^2} \|\nabla \mathbf{u}\|_{L^2} + \frac{5 \alpha}{2} |\xi| \|\nabla \mathbf{u}\|_{L^2}^2 \nonumber \\ \noalign{\smallskip} 
&\leq& \alpha |\xi|^2 \|\mathbf{u}\|_{L^2}^2 + \alpha \!\left( \frac{5}{2} |\xi| + |\xi|^2 \right)\!\|\nabla \mathbf{u}(t)\|_{L^2}^{2} . \label{enl3}
\end{eqnarray}
In view of \eqref{enl1}, \eqref{enl2}, and \eqref{enl3}, we obtain the desired result.
\end{proof}

We have now the following estimate in frequency space. 

\begin{proposition}\label{Prop1}
Let \(\mathbf{u}\) be a weak solution to \eqref{eqn:1}. Then
\[
|\widehat{\mathbf{u}}(\xi, t)|^2 \leq 2 e^{2t M_{\alpha, \mu}(\xi)} |\widehat{\mathbf{u}_0}(\xi)|^2 + C(|\xi|^2 + |\xi|^4) \left( \int_0^t \|\mathbf{u}(s)\|_{H^1_\alpha(\mathbb{R}^3)}^2 \,ds \right)^2\!,
\]
for all \(t \geq 0\) and \(\xi \in \mathbb{R}^3\), where $M_{\alpha, \mu}(\xi)$ is given by \eqref{eqn:deft_M}. Here, \(C > 0\) is a constant that depends only on \(\alpha\).
\end{proposition}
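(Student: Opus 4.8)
The plan is to pass to the frequency side, eliminate the pressure through the Leray projection, solve the resulting scalar linear ODE (one for each fixed $\xi$) by Duhamel's formula, and then estimate the nonlinear contribution using Lemma \ref{Lema1}.

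First I would take the Fourier transform of the first equation in \eqref{eqn:1}, abbreviating the nonlinear term by $N := \operatorname{curl}(\mathbf{u} - \alpha \Delta \mathbf{u}) \times \mathbf{u}$. Since $1 + \alpha|\xi|^2$ does not depend on $t$, this yields
\[
(1 + \alpha|\xi|^2)\,\partial_t \widehat{\mathbf{u}} + \mu |\xi|^2\, \widehat{\mathbf{u}} + \widehat{N} + i\xi\,\widehat{p} = 0.
\]
The incompressibility condition $\operatorname{div} \mathbf{u} = 0$ gives $\widehat{\mathbf{u}}(\xi,t) \perp \xi$, so applying the Leray projector $\mathbb{P}$ (in Fourier space, orthogonal projection onto $\xi^{\perp}$) annihilates the pressure term $i\xi\,\widehat{p}$ while leaving $\widehat{\mathbf{u}}$ and $\partial_t \widehat{\mathbf{u}}$ unchanged. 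Dividing by $1 + \alpha|\xi|^2$ and recalling the definition \eqref{eqn:deft_M} of $M_{\alpha,\mu}$, I obtain the linear ODE
\[
\partial_t \widehat{\mathbf{u}} - M_{\alpha,\mu}(\xi)\,\widehat{\mathbf{u}} = -\frac{\widehat{\mathbb{P}N}}{1 + \alpha|\xi|^2}.
\]

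Next I would integrate by Duhamel's formula, with integrating factor $e^{-t M_{\alpha,\mu}(\xi)}$, to get
\[
\widehat{\mathbf{u}}(\xi,t) = e^{t M_{\alpha,\mu}(\xi)}\,\widehat{\mathbf{u}_0}(\xi) - \int_0^t e^{(t-s)M_{\alpha,\mu}(\xi)}\,\frac{\widehat{\mathbb{P}N}(\xi,s)}{1+\alpha|\xi|^2}\,ds.
\]
Taking absolute values and using three elementary facts — that $M_{\alpha,\mu}(\xi) \leq 0$ so $e^{(t-s)M_{\alpha,\mu}(\xi)} \leq 1$ for $s \leq t$, that $(1+\alpha|\xi|^2)^{-1} \leq 1$, and that the orthogonal projection does not increase the pointwise Fourier magnitude, $|\widehat{\mathbb{P}N}(\xi,s)| \leq |\widehat{N}(\xi,s)|$ — reduces the integral term to $\int_0^t |\widehat{N}(\xi,s)|\,ds$. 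Lemma \ref{Lema1} then bounds the integrand by $C(|\xi|+|\xi|^2)\|\mathbf{u}(s)\|_{H^1_\alpha(\mathbb{R}^3)}^2$, giving
\[
|\widehat{\mathbf{u}}(\xi,t)| \leq e^{t M_{\alpha,\mu}(\xi)}|\widehat{\mathbf{u}_0}(\xi)| + C(|\xi|+|\xi|^2)\int_0^t \|\mathbf{u}(s)\|_{H^1_\alpha(\mathbb{R}^3)}^2\,ds.
\]

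Finally I would square, using $(a+b)^2 \leq 2a^2 + 2b^2$, and absorb the cross term in $(|\xi|+|\xi|^2)^2 = |\xi|^2 + 2|\xi|^3 + |\xi|^4$ via the Young-type bound $2|\xi|^3 \leq |\xi|^2 + |\xi|^4$, so that $(|\xi|+|\xi|^2)^2 \leq 2(|\xi|^2+|\xi|^4)$; after renaming the constant this is precisely the claimed inequality. None of the steps is genuinely difficult: the only points demanding care are the clean elimination of the pressure via the Leray projection (which is what makes the ODE scalar and is justified by $\widehat{\mathbf{u}} \perp \xi$) and the observation that $\mathbb{P}$ is norm-nonincreasing in Fourier space, which is exactly what permits discarding it before invoking Lemma \ref{Lema1}. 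The factor $2$ on the linear term and the precise weight $|\xi|^2+|\xi|^4$ are then just the bookkeeping output of these two squaring estimates.
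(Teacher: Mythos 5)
Your proposal is correct and follows essentially the same route as the paper: Fourier transform, elimination of the pressure via the orthogonality $\xi\cdot\widehat{\mathbf{u}}=0$ (the paper pairs the equation with $\widehat{\mathbf{u}}$ to get a differential inequality for $|\widehat{\mathbf{u}}|^2$ rather than applying the Leray projector, but this is only a cosmetic difference), a Duhamel bound with $e^{(t-s)M_{\alpha,\mu}}\le 1$ and $(1+\alpha|\xi|^2)^{-1/2}\le 1$, Lemma \ref{Lema1}, and squaring.
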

\begin{proof}
Taking the Fourier transform of \eqref{eqn:1}, we obtain
\begin{equation*}
		\begin{cases}
    \widehat{\mathbf{u}}_t + \alpha |\xi|^2 \widehat{\mathbf{u}}_t + \mu |\xi|^2 \widehat{\mathbf{u}} + i \xi \widehat{p} = -\mathscr{F} \!\left\{\operatorname{rot}(\mathbf{u} - \alpha \Delta \mathbf{u}) \times \mathbf{u} \right\}\!(\xi, t) =: G(\xi, t), \\ \noalign{\smallskip}
    i \xi \cdot \widehat{\mathbf{u}}(\xi, t) = 0, \quad \xi \in \mathbb{R}^3, \quad t > 0, \\ \noalign{\smallskip}
    \widehat{\mathbf{u}}(\xi, 0) = \widehat{\mathbf{u}_0}(\xi), \quad \xi \in \mathbb{R}^3.
\end{cases} 
\end{equation*}
From this it follows that
\[
(1 + \alpha |\xi|^2) (\partial_t |\widehat{\mathbf{u}}|^2) + 2 \mu |\xi|^2 |\widehat{\mathbf{u}}|^2  \leq 2 | G(\xi, t)| |\widehat{\mathbf{u}} (\xi, t)|,
\]
and thus
\begin{eqnarray*}
    |\widehat{\mathbf{u}}(\xi, t)| &\leq& e^{t M_{\alpha, \mu}(\xi)} |\widehat{\mathbf{u}_0}(\xi)| + \int_0^t e^{(t - s) M_{\alpha, \mu}(\xi)} \:\!\frac{|G(\xi, s)|}{\sqrt{1 + \alpha |\xi|^{2}\,}} \,ds \\ \noalign{\smallskip}
    &\leq& e^{t M_{\alpha, \mu}(\xi)} |\widehat{\mathbf{u}_0}(\xi)| + \int_0^t e^{(t - s) M_{\alpha, \mu}(\xi)} \:\!|G(\xi, s)| \,ds,
\end{eqnarray*}
where $M_{\alpha, \mu}$ is given by \eqref{eqn:deft_M}. By Lemma \ref{Lema1}, we have
\[
\bigl|G(\xi, t)\bigr| \leq C \!\left(|\xi| + |\xi|^{2}\right)\! \bigl\|\mathbf{u}(t)\bigr\|^{2}_{H^1_\alpha (\mathbb{R}^3)},
\]
for all \(t \geq 0\) and \(\xi \in \mathbb{R}^3\), where \(C > 0\) depends only on \(\alpha\). This completes the proof of Proposition \ref{Prop1}.
\end{proof}

We present now the proof of the first main result of this paper. 


\begin{proof}[\bf Proof of Theorem \ref{Main1-Intro}] We will again use the Fourier splitting method. We define the closed ball centered at the origin with radius \(\rho(t)\)
\[
B(t) := \left\{ \xi \in \mathbb{R}^3 \,; \, |\xi| \leq \rho(t) \right\}\!,
\]
where
\[
\rho(t) := \left( \frac{g'(t)}{2 \mu g(t) - \alpha g'(t)} \right)^{\frac{1}{2}}\!,
\]
and \(g\) is a differentiable function on \((0, \infty)\) satisfying
\[
g(0) = 1, \quad g(t) \geq 1, \quad g'(t) > 0, \quad \text{and} \quad 2 \mu g(t) > \alpha g'(t), \quad \,\,\forall \; t  > 0.
\]
Using Plancherel's Theorem, it follows from the energy identity \eqref{id_energy} that
\[
\frac{d}{dt} \!\left( \|\widehat{\mathbf{u}}\|_{L^2}^2 + \alpha \|\widehat{\nabla \mathbf{u}}\|_{L^2}^2 \right) = -2 \mu \|\widehat{\nabla \mathbf{u}}\|_{L^2}^2,
\]
i.e.,
\[
\frac{d}{dt} \!\left( \int_{\mathbb{R}^3} (1 + \alpha |\xi|^2) |\widehat{\mathbf{u}}|^2 \, d\xi \right) = -2 \mu \int_{\mathbb{R}^3} |\xi|^2 |\widehat{\mathbf{u}}|^2 \, d\xi.
\]
Multiplying the above identity by \(g(t)\), we obtain
\begin{eqnarray*}
    \frac{d}{dt} \!\left( g(t) \int_{\mathbb{R}^3} (1 + \alpha |\xi|^2) |\widehat{\mathbf{u}}|^2 \, d\xi \right) &=& g'(t) \int_{\mathbb{R}^3} (1 + \alpha |\xi|^2) |\widehat{\mathbf{u}}|^2 \, d\xi \\ \noalign{\medskip} &  - & \, 2 \mu g(t) \int_{\mathbb{R}^3} \frac{|\xi|^2 }{1 + \alpha |\xi|^2} (1 + \alpha |\xi|^2) |\widehat{\mathbf{u}}|^2 \, d\xi.
\end{eqnarray*}
From the definition of \(B(t)\), we have
\[
\frac{2 \mu g(t) |\xi|^2}{1 + \alpha |\xi|^2} > g'(t), \quad \,\,\forall \; \xi \in B(t)^c \equiv \mathbb{R}^{3} \, \backslash \, B(t).
\]
Thus,
\begin{align*}
    \frac{d}{dt} & \!\left( g(t) \int_{\mathbb{R}^3} (1 + \alpha |\xi|^2) |\widehat{\mathbf{u}}(\xi, t)|^2 \, d\xi \right)  \leq g'(t) \int_{\mathbb{R}^3} (1 + \alpha |\xi|^2) |\widehat{\mathbf{u}}(\xi, t)|^2 \, d\xi   \\ & -  2 \mu g(t) \int_{B(t)^c} \frac{|\xi|^2}{1 + \alpha |\xi|^2} (1 + \alpha |\xi|^2) |\widehat{\mathbf{u}}(\xi, t)|^2 \, d\xi  \\ & \leq g'(t) \int_{B(t)} (1 + \alpha |\xi|^2) |\widehat{\mathbf{u}}(\xi, t)|^2 \, d\xi,
\end{align*}
that is,
\[
\frac{d}{dt} \!\left( g(t) \int_{\mathbb{R}^3} (1 + \alpha |\xi|^2) |\widehat{\mathbf{u}}(\xi, t)|^2 \, d\xi \right) \leq g'(t) \int_{B(t)} (1 + \alpha |\xi|^2) |\widehat{\mathbf{u}}(\xi, t)|^2 \, d\xi.
\]
Integrating the above inequality with respect to time, we obtain
\begin{equation}\label{星号}
    g(t) \int_{\mathbb{R}^3} (1 + \alpha |\xi|^2) |\widehat{\mathbf{u}}(\xi, t)|^2 \, d\xi \leq I_1 + I_2(t),
\end{equation}
where
\[
I_1 := g(0) \int_{\mathbb{R}^3} (1 + \alpha |\xi|^2) |\widehat{\mathbf{u}_0}(\xi)|^2 \, d\xi,
\]
and
\[
I_2(t) := \int_0^t \left( g'(s) \int_{B(s)} (1 + \alpha |\xi|^2) |\widehat{\mathbf{u}}(\xi, s)|^2 \, d\xi \right) \! ds.
\]
Note that, by Plancherel's theorem, we have
\begin{eqnarray*}
    & & \int_{\mathbb{R}^3} (1 + \alpha |\xi|^2) |\widehat{\mathbf{u}_0}(\xi)|^2 \, d\xi = \int_{\mathbb{R}^3} |\widehat{\mathbf{u}_0}(\xi)|^2 \, d\xi + \alpha \int_{\mathbb{R}^3} |\xi|^2 |\widehat{\mathbf{u}_0}(\xi)|^2 \, d\xi \\ \noalign{\medskip} & & = \int_{\mathbb{R}^3} |\widehat{\mathbf{u}}_0(\xi)|^2 \, d\xi + \alpha \int_{\mathbb{R}^3} |\widehat{\nabla \mathbf{u}}_0(\xi)|^2 \, d\xi = \|\widehat{\mathbf{u}_0}\|_{L^2}^2 + \alpha \|\widehat{\nabla \mathbf{u}_0}\|_{L^2}^2 \\ \noalign{\medskip} & & = (2 \pi)^{3} \!\left(\|\mathbf{u}_0\|_{L^2}^2 + \alpha \|\nabla \mathbf{u}_0\|_{L^2}^{2}\right) = (2 \pi)^{3} \|\mathbf{u}_0\|_{H^1_\alpha(\mathbb{R}^3)}^{2}.
\end{eqnarray*}
Therefore,
\begin{equation}\label{contaI1}
I_1 = g(0) \cdot (2\pi)^3 \:\!\|\mathbf{u}_0\|_{H^1_\alpha(\mathbb{R}^3)}^2 = (2\pi)^3 \:\!\|\mathbf{u}_0\|_{H^1_\alpha(\mathbb{R}^3)}^2,   
\end{equation}
since \(g(0) = 1\). Next, we will estimate \(I_2(t)\). By the Proposition \ref{Prop1}, we have
\begin{eqnarray*}
 & & I_2(t) \leq 2 \int_0^t g'(s) \!\left(\int_{B(s)} (1 + \alpha |\xi|^2) e^{2s M_{\alpha, \mu}(\xi)} |\widehat{\mathbf{u}_0}(\xi)|^2 \, d\xi\right)  \! ds   \\ \noalign{\medskip} & & + \,C \int_0^t g'(s) \!\left(\int_{B(s)} (1 + \alpha |\xi|^2) (|\xi|^2 + |\xi|^4) \!\left( \int_0^s \|\mathbf{u}(\tau)\|_{H^1_\alpha(\mathbb{R}^3)}^2 \, d\tau \right)^{2}  d\xi \right)\! ds \\ \noalign{\medskip} & & =: I_{2,1}(t) + I_{2,2}(t).
\end{eqnarray*}
Initially, we observe that, by Plancherel
\begin{align*}
    \int_{B(s)} & (1 + \alpha |\xi|^2) e^{2s M_{\alpha,\mu} (\xi)}|\widehat{\mathbf{u}_0}(\xi)|^2 \, d\xi = \int_{B(s)} (1 + \alpha |\xi|^2) |\widehat{\overline{\mathbf{u}}}(\xi, s)|^2 \,d\xi \\  
&\leq \int_{\mathbb{R}^3} (1 + \alpha |\xi|^2) |\widehat{\overline{\mathbf{u}}}(\xi, s)|^2 \,d\xi = \int_{\mathbb{R}^3} |\widehat{\overline{\mathbf{u}}}(\xi, s)|^2 \,d\xi + \alpha \int_{\mathbb{R}^3} |\xi|^2 |\widehat{\overline{\mathbf{u}}}(\xi, s)|^2 \, d\xi \\  & = \int_{\mathbb{R}^3} |\widehat{\overline{\mathbf{u}}}(\xi, s)|^2 \, d\xi + \alpha \int_{\mathbb{R}^3} |\widehat{\nabla \overline{\mathbf{u}}}(\xi, s)|^2 \, d\xi = \|\widehat{\overline{\mathbf{u}}}(s)\|_{L^2}^2 + \alpha \|\widehat{\nabla \:\! \overline{\mathbf{u}}}(s)\|_{L^2}^2  \\ & = (2\pi)^3 \left(\|\overline{\mathbf{u}}(s)\|_{L^2}^2 + \alpha \|\nabla \:\! \overline{\mathbf{u}}(s)\|_{L^2}^2 \right) = (2\pi)^3 \:\! \|\overline{\mathbf{u}}(s) \|_{H^1_{\alpha} (\mathbb{R}^3)}^2.
\end{align*}
Therefore, by Theorem \ref{teodeclupl158}, we have
\begin{equation}\label{contaI21}
I_{2,1}(t) \leq 2  (2\pi)^3 \int_0^t g'(s) \|\overline{\mathbf{u}} (s)\|_{H^1_\alpha (\mathbb{R}^3)}^2 \, ds \leq 16 \pi^3 \:\! C_2 \int_0^t g'(s) \!\left(s + C_{3}\right)^{-\left(\frac{3}{2} + r^*\right)} \, ds.    
\end{equation}
On the other hand, using polar coordinates, it follows that
\begin{equation}\label{contaI22}
I_{2,2}(t) \leq C \!\left( \int_0^t g'(s) (\rho(s)^5 + \rho(s)^7 + \rho(s)^9) ds \right) \! \left( \int_0^t \|\mathbf{u}(\tau)\|_{H^1_\alpha(\mathbb{R}^3)}^2 \,d\tau \right)^2.
\end{equation}
For a fixed \(r^* \in (-\frac{3}{2}, \infty)\), we choose
\[
g(t) = \widetilde{b}^{-m} (t + \widetilde{b})^{m}, \quad \text{where} \quad m > \max \left\{ \frac{5}{2}, \frac{3}{2} + r^* \right\}\!, \quad \text{and} \quad \widetilde{b} \geq 1 + \frac{\alpha \:\! m}{2 \mu}.
\]
This way,
\begin{equation}\label{esc_ro}
   \rho(t) = \left(\frac{m}{2 \mu}\right)^{\frac{1}{2}} \!\left(t + \widetilde{a}\right)^{-\frac{1}{2}}, \quad \text{where} \quad \widetilde{a} := \widetilde{b} - \frac{\alpha \:\! m}{2 \mu} \geq 1. 
\end{equation}
From this, it follows from \eqref{contaI21} and \eqref{contaI22}, respectively, that
\begin{eqnarray}\label{dag}
    I_{2,1}(t) &\leq& 16 \pi^3 \:\!C_2 \widetilde{b}^{-m} \:\!m \int_0^t (s + \widetilde{b})^{m-1} (s + C_3)^{-\left(\frac{3}{2} + r^*\right)} \,ds \nonumber \\ \noalign{\smallskip} 
    &\leq& C_4 \int_0^t (s + C_5)^{m-1 - \left(\frac{3}{2} + r^*\right)} \,ds \nonumber \\ \noalign{\smallskip} 
    &\leq& C_4 (t + C_5)^{m - \left(\frac{3}{2} + r^*\right)},
\end{eqnarray}
and
\begin{eqnarray}\label{dag_doble}
I_{2,2}(t) &\leq& C_6 \int_0^t (s + \widetilde{b})^{m-1} \!\left( (s + \widetilde{a})^{-\frac{5}{2}} + (s + \widetilde{a})^{-\frac{7}{2}} + (s + \widetilde{a})^{-\frac{9}{2}} \right) \!ds \nonumber \\ \noalign{\smallskip} & & \times \left( \int_0^t \|\mathbf{u}(\tau)\|_{H^1_\alpha(\mathbb{R}^3)}^2 \,d\tau \right)^2 \nonumber \\ \noalign{\smallskip} &\leq& C_7 \left( \int_0^t (s + C_8)^{m-1 - \frac{5}{2}} ds \right) \left( \int_0^t \|\mathbf{u}(\tau)\|_{H^1_\alpha(\mathbb{R}^3)}^2 \,d\tau \right)^2 \nonumber \\ \noalign{\smallskip} &\leq& C_7 (t+ C_{8})^{m - \frac{5}{2}} \left( \int_0^t \|\mathbf{u}(\tau)\|_{H^1_\alpha(\mathbb{R}^3)}^2 \,d\tau \right)^{2}\!.
\end{eqnarray}
Hence, since
\[
\int_{\mathbb{R}^3} (1 + \alpha |\xi|^2) |\widehat{\mathbf{u}}(\xi, t)|^2 d\xi = (2\pi)^3 \|\mathbf{u}(t)\|_{H^1_\alpha(\mathbb{R}^3)}^2,
\]
it follows from \eqref{星号}, \eqref{contaI1}, \eqref{dag} and \eqref{dag_doble} that
\begin{align}
\label{§}
 (2\pi)^3 g(t) \|\mathbf{u}(t)\|_{H^1_\alpha(\mathbb{R}^3)}^2 & \leq (2\pi)^3 \|\mathbf{u}_0\|_{H^1_\alpha(\mathbb{R}^3)}^2  +  C_4 (t + C_5)^{m - (\frac{3}{2} + r^*)} \notag \\ &  + C_7 (t + C_8)^{m - \frac{5}{2}} \left( \int_0^t \|\mathbf{u}(\tau)\|_{H^1_\alpha(\mathbb{R}^3)}^2 \,d\tau \right)^2.
\end{align}
Initially, using \eqref{energy}, we obtain
\begin{displaymath}
g(t) \|\mathbf{u}(t)\|_{H^1_\alpha(\mathbb{R}^3)}^2 \leq \|\mathbf{u}_0\|_{H^1_\alpha(\mathbb{R}^3)}^2 + C_4 (t + C_5)^{m - (\frac{3}{2} + r^*)}  +\, C_7 (t + C_8)^{m - \frac{5}{2}+ 2},    
\end{displaymath}
and consequently
\begin{eqnarray}\label{corazon}
\|\mathbf{u}(t)\|_{H^1_\alpha(\mathbb{R}^3)}^2 &\leq& C (t + \widetilde{C})^{-m} + C (t + \widetilde{C})^{-(\frac{3}{2} + r^*)} + C (t + \widetilde{C})^{- \frac{1}{2}} \nonumber \\ \noalign{\smallskip} &\leq& \widetilde{C}_1 (t + \widetilde{C}_2)^{-\min\left\{\frac{3}{2} + r^*, \frac{1}{2}\right\}}, \quad \,\,\forall \; t \geq 0,
\end{eqnarray}
since $g(t) = \widetilde{b}^{-m} (t + \widetilde{b})^{m}$. This estimate preliminary can be further improved, as we'll show below. Suppose that \(\min \!\left\{\frac{3}{2} + r^*, \frac{1}{2}\right\} = \frac{3}{2} + r^*\), i.e., \(-\frac{3}{2} < r^* \leq - 1\). In this case, we have from \eqref{corazon} that
\begin{eqnarray*}
 \left( \int_0^t \|\mathbf{u}(s)\|_{H^1_\alpha(\mathbb{R}^3)}^2 \,ds \right)^2 &\leq& \widetilde{C}_1^2 \left( \int_0^t (s + \widetilde{C}_2)^{-(\frac{3}{2} + r^*)} \,ds \right)^2    \\ \noalign{\smallskip} &\leq& \widetilde{C}_1^2 \left( (t + \widetilde{C}_2)^{-(\frac{1}{2} + r^*)} \right)^2 = \widetilde{C}_1^2 (t + \widetilde{C}_2)^{-(1 + 2r^*)}.
\end{eqnarray*}
Therefore, remembering the choice of $g$, we have by \eqref{§}, that
\begin{eqnarray*}
\|\mathbf{u}(t)\|_{H^1_\alpha(\mathbb{R}^3)}^2 &\leq& C (t + \widetilde{C})^{-m} + C (t + \widetilde{C})^{-(\frac{3}{2} + r^*)} + C (t + \widetilde{C})^{-(\frac{5}{2} + 1 + 2r^*)} \\ \noalign{\smallskip} &\leq& C (t + \widetilde{C})^{-(\frac{3}{2} + r^*)} + C (t + \widetilde{C})^{-(\frac{7}{2} + 2r^*)} \\ \noalign{\smallskip} &\leq& C (t + \widetilde{C})^{-(\frac{3}{2} + r^*)}. 
\end{eqnarray*}
On the other hand, if \(\min \!\left\{\frac{3}{2} + r^*, \frac{1}{2}\right\} = \frac{1}{2}\), i.e., \(r^* \geq -1\), it follows by \eqref{corazon} that

\[
\left( \int_0^t \|\mathbf{u}(s)\|_{H^1_\alpha(\mathbb{R}^3)}^2 \,ds \right)^2 \leq \widetilde{C}_1^2 \left( \int_0^t (s + \widetilde{C}_2)^{-\frac{1}{2}} \,ds \right)^2 \leq \widetilde{C}_1^2 (t + \widetilde{C}_2).
\]
Again from \eqref{§}, we obtain
\begin{eqnarray*}
    \|\mathbf{u}(t)\|_{H^1_\alpha(\mathbb{R}^3)}^2 &\leq& C (t + \widetilde{C})^{-m} + C (t + \widetilde{C})^{-\left(\frac{3}{2} + r^*\right)} + C (t + \widetilde{C})^{-\frac{5}{2} + 1} \\ \noalign{\smallskip} &\leq& C (t + \widetilde{C})^{-\left(\frac{3}{2} + r^*\right)}  + C (t + \widetilde{C})^{-\frac{3}{2}}  \\ \noalign{\smallskip} &\leq& \left\{\begin{array}{l}
C(t+\widetilde{C})^{-\left(\frac{3}{2}+r^*\right),}, \text { if } -1 \leq r^* \leq 0 ,\\ \noalign{\smallskip}
C(t+\widetilde{C})^{-\frac{3}{2}}, \text { if } r^* \geq 0.
\end{array}\right.
\end{eqnarray*}
Therefore, from the above considerations, we have
\begin{equation}\label{oro}
\|\mathbf{u}(t)\|_{H^1_\alpha(\mathbb{R}^3)}^2 \leq K_1 (t + K_2)^{-\min\left\{\frac{3}{2} + r^*, \frac{3}{2}\right\}}, \quad \,\,\forall \; t \geq 0. \end{equation}
It is important to note that $K_2 \geq 1$ (see \eqref{esc_ro}). We will apply the ``bootstrap'' argument once more. Suppose that \(\min \!\left\{\frac{3}{2} + r^*, \frac{3}{2}\right\} = \frac{3}{2} + r^*\), i.e., \(r^* \leq 0\). 
With \eqref{oro} in mind, we will divide the analysis into three cases: \newline\newline 
\noindent (i) If \(-\frac{3}{2} < r^* < -\frac{1}{2}\), we then have
\[
\left( \int_0^t \|\mathbf{u}(s)\|_{H^1_\alpha(\mathbb{R}^3)}^2 \,ds \right)^2 \leq K_1^2 (t + K_2)^{-(1 + 2r^*)}.
\]

\noindent (ii) If \(r^* = -\frac{1}{2}\), we then have (since $K_2 \geq 1$)
\[
\left( \int_0^t \|\mathbf{u}(s)\|_{H^1_\alpha(\mathbb{R}^3)}^2 \,ds \right)^2 \leq K_1^2 \ln^2(t + K_2).
\]

\noindent (iii) If \(-\frac{1}{2} < r^* \leq 0\), we then have
\[
\left( \int_0^t \|\mathbf{u}(s)\|_{H^1_\alpha(\mathbb{R}^3)}^2 \,ds \right)^2 \leq C.
\]
So, by \eqref{§} and by the choice of $g$:
\newline\newline 
\noindent (i) If \(
- \frac{3}{2} < r^* < -\frac{1}{2}
\), then
\begin{eqnarray*}
    \|\mathbf{u}(t)\|_{H^1_\alpha(\mathbb{R}^3)}^2 &\leq& C (t + \widetilde{C})^{-m} + C (t + \widetilde{C})^{-(\frac{3}{2} + r^*)} + C (t + \widetilde{C})^{-\frac{5}{2} - 1 - 2r^*} \\ \noalign{\smallskip} &\leq& C (t + \widetilde{C})^{-(\frac{3}{2} + r^*)} + C (t + \widetilde{C})^{-(\frac{7}{2} + 2r^*)} \\ \noalign{\smallskip} &\leq& C (t + \widetilde{C})^{-(\frac{3}{2} + r^*)}.
\end{eqnarray*}

\noindent (ii) If \(r^* = -\frac{1}{2}\), then
\begin{eqnarray*}
\|\mathbf{u}(t)\|_{H^1_\alpha(\mathbb{R}^3)}^2 &\leq& C (t + \widetilde{C})^{-m} + C (t + \widetilde{C})^{-(\frac{3}{2} + r^*)} + C (t + \widetilde{C})^{-\frac{5}{2}} \ln^2(t + \widetilde{C}) \\ \noalign{\smallskip} &\leq& C (t + \widetilde{C})^{-1}  + C (t + \widetilde{C})^{-\frac{5}{2}} \ln^2(t + \widetilde{C}) \\ \noalign{\smallskip} &\leq& C (t + \tilde{C})^{-1} =  C (t + \widetilde{C})^{-(\frac{3}{2} + r^*)}. 
\end{eqnarray*}

\noindent (iii) If \(-\frac{1}{2} < r^* \leq 0\), then
\begin{eqnarray*}
 \|\mathbf{u}(t)\|_{H^1_\alpha(\mathbb{R}^3)}^2 &\leq& C (t + \widetilde{C})^{-m} + C (t + \widetilde{C})^{-(\frac{3}{2} + r^*)} + C (t + \widetilde{C})^{-\frac{5}{2}}   \\ \noalign{\smallskip} &\leq& C (t + \widetilde{C})^{-(\frac{3}{2} + r^*)}.
\end{eqnarray*}
On the other hand, if \(\min\!\left\{\frac{3}{2} + r^*, \frac{3}{2}\right\} = \frac{3}{2}\), i.e., \(r^* \geq 0\), then, by \eqref{oro}:
\[
\left( \int_0^t \|\mathbf{u}(s)\|_{H^1_\alpha(\mathbb{R}^3)}^2 \,ds \right)^2 \leq C,
\]
and, again by \eqref{§}, we find
\begin{eqnarray*}
\|\mathbf{u}(t)\|_{H^1_\alpha(\mathbb{R}^3)}^2 &\leq& C (t + \widetilde{C})^{-m} + C (t + \widetilde{C})^{-(\frac{3}{2} + r^*)} + C (t + \widetilde{C})^{-\frac{5}{2}} \\ \noalign{\smallskip}  
&\leq& C (t + \widetilde{C})^{-(\frac{3}{2} + r^*)} + C (t + \widetilde{C})^{-\frac{5}{2}} \\ \noalign{\smallskip} 
&\leq& \left\{\begin{array}{l}
C(t+\widetilde{C})^{-\left(\frac{3}{2}+r^*\right),}, \text { if } 0 \leq r^* \leq 1 ,\\ \noalign{\smallskip}
C(t+\widetilde{C})^{-\frac{5}{2}}, \text { if } r^* \geq 1.
\end{array}\right.
\end{eqnarray*}
This ends the proof of the theorem.
\end{proof}

\section{Asymptotic equivalence}

\label{proof-Main2}

The estimate from  Theorem \ref{Main1-Intro} enables us to determine the decay rate for the difference between a solution $\mathbf{u}$ of \eqref{eqn:1} and the solution $\overline{\mathbf{u}}$ to the linear part \eqref{plppe} with the same initial data. However, due to the presence of the regularizing term $- \alpha \:\!\partial_{t} \Delta \mathbf{u}$ in \eqref{eqn:1}, we need more regularity for the initial data. 
%

Before proving Theorem \ref{Main2-intro}, we need the following auxiliary result for the derivatives of the solutions of the linear problem.

\begin{proposition}[Decay of the derivatives of linear part]\label{teoder128}
Let $\overline{\mathbf{u}}$ be the solution to the linear part \eqref{plppe}, with \( D^{m} \:\! \mathbf{u}_0 \in H^{1}_\alpha(\mathbb{R}^3) \), $m \in \mathbb{N} \cup \{0\}$, and \(\operatorname{div} \mathbf{u}_0 = 0\). Then,
$$\bigl\|D^m \:\!\overline{\mathbf{u}}(t)\bigr\|^2_{H^1_\alpha(\mathbb{R}^3)} \leq \widetilde{C}_{2}(t+\widetilde{C}_{3})^{-\left(\frac{3}{2}+r^{*} +m\right)}, \quad \,\,\forall \; t \geq 0.$$ 
\end{proposition}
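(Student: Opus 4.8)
The plan is to reduce the statement to Theorem \ref{teodeclupl158}, which already gives the sharp decay of the $H^1_\alpha$ norm for solutions of the linear problem \eqref{plppe} in terms of the decay character of their initial data. The crucial observation is that the solution operator of \eqref{plppe} is the Fourier multiplier $e^{tM_{\alpha,\mu}(\xi)}$, with $M_{\alpha,\mu}$ given by \eqref{eqn:deft_M}, which commutes with any spatial derivative. Concretely, for a multi-index $\beta$ with $|\beta| = m$ one has $\widehat{D^\beta \overline{\mathbf{u}}}(\xi,t) = (i\xi)^\beta e^{tM_{\alpha,\mu}(\xi)}\widehat{\mathbf{u}_0}(\xi) = e^{tM_{\alpha,\mu}(\xi)}\widehat{D^\beta \mathbf{u}_0}(\xi)$, so that $D^\beta \overline{\mathbf{u}}$ is itself the solution of the linear problem \eqref{plppe} with initial datum $D^\beta \mathbf{u}_0$. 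Moreover $\operatorname{div}(D^\beta \mathbf{u}_0) = D^\beta(\operatorname{div}\mathbf{u}_0) = 0$, and $D^\beta \mathbf{u}_0 \in H^1_\alpha(\mathbb{R}^3)$ by hypothesis, so $D^\beta\mathbf{u}_0$ is an admissible initial datum for Theorem \ref{teodeclupl158}.

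Second, I would identify the decay character of the shifted datum. By Theorem \ref{cnmesl15} applied with $s = m$, one has $r^*(\Lambda^m \mathbf{u}_0) = m + r^*(\mathbf{u}_0) = m + r^*$, and since $\sum_{|\beta|=m}|\xi^\beta|^2$ is comparable to $|\xi|^{2m}$, the same shift holds for the integer-order derivatives, giving $r^*(D^m \mathbf{u}_0) = m + r^*$. One checks that the hypotheses of Theorem \ref{teodeclupl158} are met: since $r^* > -\tfrac{3}{2}$ and $m \geq 0$, the shifted character satisfies $-\tfrac{3}{2} < m + r^* < \infty$, as required. Since Proposition \ref{teoder128} only asserts an upper bound, strictly one needs only the upper-bound half of Theorem \ref{teodeclupl158}, i.e.\ the generalized upper character $r^*_+$ and the condition $P_r(\mathbf{u}_0)_+ < \infty$.

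Finally, applying Theorem \ref{teodeclupl158} in dimension $n = 3$ to the solution $D^m\overline{\mathbf{u}}$ of \eqref{plppe} with initial datum $D^m \mathbf{u}_0$ of decay character $m + r^*$ yields directly
\[
\bigl\|D^m\overline{\mathbf{u}}(t)\bigr\|^2_{H^1_\alpha(\mathbb{R}^3)} \leq \widetilde{C}_2 \,(t + \widetilde{C}_3)^{-\left(\frac{3}{2} + (m + r^*)\right)}, \quad \forall\, t \geq 0,
\]
which is the claimed estimate after reordering the exponent as $\tfrac{3}{2} + r^* + m$; summing over $|\beta| = m$ recovers the full $D^m$ norm.

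I expect the main obstacle to be the bookkeeping around the decay character of $D^m\mathbf{u}_0$: Theorem \ref{cnmesl15} is phrased for the homogeneous fractional operator $\Lambda^s$, so one must justify that the integer-order derivatives $D^m$ produce exactly the same shift. Alternatively, and perhaps more transparently, one can bypass Theorem \ref{cnmesl15} entirely and simply repeat the Fourier-splitting argument used in the proof of Theorem \ref{teodeclupl158} for $D^m\overline{\mathbf{u}}$: the energy identity \eqref{energy_psudoLP15} holds verbatim with $\overline{\mathbf{u}}$ replaced by $D^m\overline{\mathbf{u}}$, and on the low-frequency ball $B(t)$ the extra weight $|\xi|^{2m}$ is controlled by $\rho(t)^{2m}\int_{B(t)}|\widehat{\mathbf{u}_0}(\xi)|^2\,d\xi$, which upgrades the decay-character bound from $r^*$ to $r^* + m$ and produces the stated rate directly.
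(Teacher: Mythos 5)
Your main argument is correct and takes a genuinely different, more economical route than the paper. The paper reruns the Fourier-splitting machinery from the proof of Theorem \ref{teodeclupl158}: it applies $D^m$ to \eqref{plppe}, derives the energy identity for $D^m\overline{\mathbf{u}}$, and bounds $\int_{B(t)}|\xi|^{2m}|\widehat{\mathbf{u}_0}|^2\,d\xi$ and $\int_{B(t)}|\xi|^{2(m+1)}|\widehat{\mathbf{u}_0}|^2\,d\xi$ by $\widetilde{c}\,\rho^{2(r+m)+3}$ and $\widetilde{c}\,\rho^{2(r+m+1)+3}$ using the shifted decay characters, before choosing $g(t)=b^{-k}(t+b)^k$ and integrating. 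You instead exploit that the solution operator is the Fourier multiplier $e^{tM_{\alpha,\mu}(\xi)}$, hence commutes with $D^\beta$, so $D^\beta\overline{\mathbf{u}}$ is itself the solution of \eqref{plppe} with the divergence-free datum $D^\beta\mathbf{u}_0\in H^1_\alpha(\mathbb{R}^3)$ of decay character $m+r^*>-\tfrac{3}{2}$, and Theorem \ref{teodeclupl158} applies verbatim. This buys brevity and avoids duplicating the splitting computation; the only content you must supply is the identification $r^*(D^m\mathbf{u}_0)=m+r^*(\mathbf{u}_0)$, which you correctly reduce to Theorem \ref{cnmesl15} via the uniform equivalence $\sum_{|\beta|=m}|\xi^\beta|^2\asymp|\xi|^{2m}$ --- and the paper's own proof relies on exactly the same shift, so you are not assuming anything the paper does not. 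Your remark that only the upper half ($P_r(\cdot)_+<\infty$) of the hypotheses is needed is also consistent with how the paper uses the result. The alternative route you sketch at the end (redoing the splitting with the extra $|\xi|^{2m}$ weight on $B(t)$) is precisely the paper's proof, so you have both arguments in hand.
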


\begin{proof}
Applying the derivation operator $D^m$ to \eqref{plppe}, we obtain
\[
\partial_t (D^m \:\!\overline{\mathbf{u}} - \alpha \Delta D^m \:\!\overline{\mathbf{u}}) - \mu \Delta D^m \:\!\overline{\mathbf{u}} = 0.
\]
Here, again we will use the Fourier splitting method. Multiplying (dot product) the equation above by \( D^m \:\!\overline{\mathbf{u}} \) and integrating in \( \mathbb{R}^3 \), we find
\[
\frac{1}{2} \frac{d}{dt} \!\left(\| D^m \:\!\overline{\mathbf{u}} \|_{L^2}^2 + \alpha \| D^{m+1} \:\!\overline{\mathbf{u}} \|_{L^2}^2 \right) + \mu \| D^{m+1} \:\!\overline{\mathbf{u}} \|_{L^2}^2 = 0.
\]
Using Plancherel, we get
\begin{eqnarray*}
  \frac{d}{dt} \| \widehat{D^m \:\!\overline{\mathbf{u}}}(t) \|_{H^1_\alpha(\mathbb{R}^3)}^2 &=& -2\mu \| \widehat{D^{m+1} \:\!\overline{\mathbf{u}}}(t) \|_{L^2}^2   \\ \noalign{\smallskip} &=& -2\mu \int_{\mathbb{R}^3} |\xi|^{2m+2} \:\!|\widehat{\overline{\mathbf{u}}}(\xi, t)|^2 \,d\xi.
\end{eqnarray*}
Multiplying the above inequality by \( g(t) \), we obtain
\begin{eqnarray*}
&\displaystyle \frac{d}{dt} \!\left[ g(t) \| \widehat{D^m \:\!\overline{\mathbf{u}}}(t) \|_{H^1_\alpha(\mathbb{R}^3)}^2 \right] = g'(t) \| \widehat{D^m \:\!\overline{\mathbf{u}}}(t) \|_{H^1_\alpha(\mathbb{R}^3)}^2
- 2\mu g(t) \int_{\mathbb{R}^3} |\xi|^{2m+2} |\widehat{\overline{\mathbf{u}}}(\xi, t)|^2 \,d\xi & \\ \noalign{\smallskip} &\displaystyle \leq g'(t) \| \widehat{D^m \:\!\overline{\mathbf{u}}}(t) \|_{H^1_\alpha(\mathbb{R}^3)}^2
- 2\mu g(t)\int_{B(t)^c} |\xi|^{2m+2} |\widehat{\overline{\mathbf{u}}}(\xi, t)|^2 \,d\xi,&
\end{eqnarray*}
and consequently
\begin{eqnarray*}
\frac{d}{dt} \!\left[ g(t) \| \widehat{D^m \:\!\overline{\mathbf{u}}}(t) \|_{H^1_\alpha(\mathbb{R}^3)}^2 \right] &\leq& 
g'(t) \int_{\mathbb{R}^3} (1 + \alpha |\xi|^2) |\xi|^{2m} |\widehat{\overline{\mathbf{u}}}(\xi, t)|^2 \,d\xi \\ \noalign{\medskip} & - &
 \,2\mu g(t) \int_{B(t)^c} \frac{|\xi|^{2m+2}}{1+\alpha |\xi|^2} (1+\alpha |\xi|^2) |\widehat{\overline{\mathbf{u}}}(\xi, t)|^2 \,d\xi.
\end{eqnarray*}
For \( \xi \in B(t)^c \), we have
\[
\frac{2\mu g(t) |\xi|^2}{1+\alpha |\xi|^2} > g'(t).
\]
Thus, considering \eqref{8924lp}, we have
\begin{eqnarray*}
\frac{d}{dt} \!\left[ g(t) \| \widehat{D^m \:\!\overline{\mathbf{u}}}(t) \|_{H^1_\alpha(\mathbb{R}^3)}^2 \right] &\leq& 
g'(t) \int_{B(t)} (1+\alpha |\xi|^2) |\xi|^{2m} |\widehat{\overline{\mathbf{u}}}(\xi, t)|^2 \,d\xi \\ \noalign{\smallskip} &\leq& g'(t) \int_{B(t)} |\xi|^{2m} |\widehat{\mathbf{u}}_0(\xi)|^2 \,d\xi \\ \noalign{\smallskip} & + &  \, \alpha \:\!g'(t) \int_{B(t)} |\xi|^{2(m+1)} |\widehat{\mathbf{u}}_0(\xi)|^2 \,d\xi.
\end{eqnarray*}
Since
\[
0<P_r(\mathbf{u}_0) < \infty, \quad r^*(D^m \mathbf{u}_0) = m + r^*(\mathbf{u}_0), \quad \text{and} \quad r^*(D^{m+1} \mathbf{u}_0) = m+1 + r^*(\mathbf{u}_0),
\]
it follows from the definition of the limit that there exist \(\widetilde{\rho}_0 > 0\) sufficiently small and \(\widetilde{c} > 0\) such that for \(0 < \rho < \widetilde{\rho}_0\):
\[
\rho^{-2(r+m)-3} \int_{B(t)} |\xi|^{2m} |\widehat{\mathbf{u}}_0(\xi)|^2 \, d\xi \leq \widetilde{c},
\]
and
\[
\rho^{-2(r+m+1)-3} \int_{B(t)} |\xi|^{2(m+1)} |\widehat{\mathbf{u}}_0(\xi)|^2 \, d\xi \leq \widetilde{c}.
\]
Here, \(\widetilde{\rho}_0\) and \(\widetilde{c}\) depend on \(P_r(\mathbf{u}_0)\). Thus:
\[
\frac{d}{dt} \!\left[ g(t) \| \widehat{D^m \:\!\overline{\mathbf{u}}}(t) \|_{H^1_\alpha(\mathbb{R}^3)}^2 \right] \leq \widetilde{c} \:\!g'(t) \rho(t)^{2(r+m)+3} +
\alpha \:\!\widetilde{c} \:\!g'(t) \rho(t)^{2(r+m+1)+3}.
\]
By Plancherel's theorem, we get
\begin{eqnarray*}
    \frac{d}{dt} \!\left[ g(t) \| D^m \:\!\overline{\mathbf{u}}(t) \|_{H^1_\alpha(\mathbb{R}^3)}^2 \right] &\leq& (2\pi)^{-3} \:\!\widetilde{c} \:\!g'(t) \rho(t)^{2(r+m)+3} \\ \noalign{\smallskip} & + & 
(2\pi)^{-3} \:\!\alpha \:\!\widetilde{c} \:\!g'(t) \rho(t)^{2(r+m+1)+3}.
\end{eqnarray*}
Now, taking \( g(t) = b^{-k} (t+b)^k \), where \( k > 1+r+m + \frac{3}{2} \) and \( b > \frac{k \:\! \alpha }{2\mu} \), we have \( g(0) = 1 \), \( g(t) \geq 1 \), \( g'(t) > 0 \), \( 2\mu g(t) > \alpha g'(t) \), and

\[
\rho(t) = \left(\frac{k}{2 \mu}\right)^{\frac{1}{2}} (t+a)^{-\frac{1}{2}},
\]
where \( a = b - \displaystyle\frac{k \:\!\alpha}{2 \mu} \) (\(0 < a < b\)). Therefore, integrating the last differential inequality above over $(0, t)$, we obtain
\begin{eqnarray*}
g(t) \|D^m \:\!\overline{\mathbf{u}}(t)\|^2_{H^1_\alpha(\mathbb{R}^3)} &\leq& g(0)  \|D^m \:\!\mathbf{u}_0 \|^2_{H^1_\alpha(\mathbb{R}^3)} +
(2\pi)^{-3} \:\!\widetilde{c} \int_0^t g'(s) \rho(s)^{2(r+m)+3} \, ds     \\ \noalign{\smallskip} & + & (2\pi)^{-3} \:\!\alpha \:\!\widetilde{c} \int_0^t g'(s) \rho(s)^{2(r+m+1)+3} \, ds.
\end{eqnarray*}
Since \( g(0) = 1 \) and \( D^{m} \:\! \mathbf{u}_0 \in H^{1}_\alpha(\mathbb{R}^3) \), it follows that
\[
g(t) \|D^m \:\!\overline{\mathbf{u}}(t)\|^2_{H^1_\alpha(\mathbb{R}^3)} \leq C + C(t+b)^{k-(r+m+\frac{3}{2})} + C(t+b)^{k-(r+m+1+\frac{3}{2})},
\]
and hence
\begin{eqnarray*}
    \|D^m \:\!\overline{\mathbf{u}}(t)\|^2_{H^1_\alpha(\mathbb{R}^3)} &\leq& C(t+b)^{-k} + C(t+b)^{-(r+m+\frac{3}{2})} + C(t+b)^{-(r+m+\frac{5}{2})} \\ \noalign{\smallskip} &\leq& C(t+b)^{-(r+m+\frac{3}{2})}, \quad \,\,\forall \; t \geq 0.
\end{eqnarray*}
This finishes the proof of the proposition.
\end{proof}

\begin{remark} \label{remark-H4}
As we will see in the proof of the next theorem, in addition to using Theorem \ref{Main1-Intro}, we will use Proposition \ref{teoder128} with $m = 1$ and $m = 3$. In other words, we need $\mathbf{u}_0$, $\nabla \:\! \mathbf{u}_0$, $D^{3} \:\! \mathbf{u}_0 \in H^{1}_\alpha(\mathbb{R}^3)$. This means that we must have $\mathbf{u}_0 \in H^{4}(\mathbb{R}^3)$. That is why we increase the regularity of the initial data in the Theorem \ref{Main2-intro}.   
\end{remark}

Now, we are in position to establish the proof of the asymptotic equivalence result. 

\begin{proof}[\bf Proof of Theorem \ref{Main2-intro}]

Let \(\mathbf{w} := \mathbf{u} - \overline{\mathbf{u}}\), where $\overline{\mathbf{u}}$ is the solution to the linear part \eqref{plppe} with the same initial data $\mathbf{u}_0 \in H^{4}(\mathbb{R}^3)$. Then, in \(\mathbb{R}^3 \times (0, \infty)\), we have
\begin{equation}\label{diffe}
\begin{cases}
\partial_t (\mathbf{w} - \alpha \Delta \mathbf{w}) - \mu \Delta \mathbf{w} + \text{rot} (\mathbf{u} - \alpha \Delta \mathbf{u}) \times \mathbf{u} + \nabla p = {\boldsymbol 0}, \\  \noalign{\smallskip}
    \text{div} \, \mathbf{w} = 0, \\ \noalign{\smallskip}
    \mathbf{w}(0) = {\boldsymbol 0}.
\end{cases} 
\end{equation}
Applying the Fourier transform, we find
\begin{equation*}
		\begin{cases}
(1 + \alpha |\xi|^2) \widehat{\mathbf{w}}_t + \mu |\xi|^2 \widehat{\mathbf{w}} + i \xi \widehat{p} = -\mathscr{F} \!\left\{\operatorname{rot}(\mathbf{u} - \alpha \Delta \mathbf{u}) \times \mathbf{u} \right\}\!(\xi, t) =: G(\xi, t), \\ \noalign{\smallskip}
i \xi \cdot \widehat{\mathbf{w}}(\xi, t) = 0, \quad \xi \in \mathbb{R}^3, \quad t > 0, \\ \noalign{\smallskip}
\widehat{\mathbf{w}}(0) = {\boldsymbol 0}.
\end{cases}
\end{equation*}
Since \(\widehat{\mathbf{w}}(0) = {\boldsymbol 0}\), we have
\begin{eqnarray*}
    |\widehat{\mathbf{w}}(\xi, t)| &\leq&  \int_0^t e^{(t - s) M_{\alpha, \mu}(\xi)} \:\!\frac{|G(\xi, s)|}{\sqrt{1 + \alpha |\xi|^{2}\,}} \,ds \\ \noalign{\smallskip}
    &\leq&  \int_0^t |G(\xi, s)| \,ds,
\end{eqnarray*}
where
$$M_{\alpha, \mu}(\xi):= - \frac{\mu|\xi|^2}{1+\alpha|\xi|^2}.$$
Thus, it follows from Lemma \ref{Lema1} that
\begin{equation}\label{imp_120824}
 \bigl|\widehat{\mathbf{w}}(\xi, t)\bigr| \leq C \!\left(|\xi| + |\xi|^{2}\right)\! \int_{0}^{t} \bigl\|\mathbf{u}(s)\bigr\|^{2}_{H^1_\alpha (\mathbb{R}^3)} \,ds, \quad \,\,\forall \; t \geq 0 \,\,\text{ and } \,\,\xi \in \mathbb{R}^{3},   
\end{equation}
where \(C > 0\) depends only on \(\alpha\). On the other hand, it follows from \eqref{diffe} that:
\[
\frac{1}{2} \frac{d}{dt} \!\left(\|\mathbf{w}(t)\|_{H^1_\alpha(\mathbb{R}^3)}^{2}\right) + \mu \|\nabla \mathbf{w}(t)\|_{L^2}^2 = \langle \operatorname{rot} (\mathbf{u} - \alpha \Delta \mathbf{u}) \times \mathbf{u}, \overline{\mathbf{u}} \:\!\rangle_{L^2}.
\]
Since
\[
\operatorname{rot} (\mathbf{u} - \alpha \Delta \mathbf{u}) \times \mathbf{u} = (\mathbf{u} \cdot \nabla) \mathbf{u} - \nabla \!\left( \frac{|\mathbf{u}|^2}{2} \right) - \alpha \operatorname{rot} (\Delta \mathbf{u}) \times \mathbf{u},
\]
we have, due to the incompressibility conditions $\operatorname{div} \mathbf{u} = \operatorname{div} \overline{\mathbf{u}} = 0$, that
\begin{align*}
   \langle (\mathbf{u} \cdot \nabla) \mathbf{u}, \overline{\mathbf{u}} \:\!\rangle_{L^2} & = -\langle (\mathbf{u} \cdot \nabla) \overline{\mathbf{u}}, \mathbf{u} \rangle_{L^2}  \leq  \|\nabla \:\!\overline{\mathbf{u}}(t)\|_{L^\infty} \| \mathbf{u}(t)\|_{L^2}^2  \\ & \leq \|\nabla \:\!\overline{\mathbf{u}}(t)\|^{\frac{1}{4}}_{L^2} \|D^3 \:\!\overline{\mathbf{u}}(t)\|^{\frac{3}{4}}_{L^2} \|\mathbf{u}(t)\|_{L^2}^2 \\ & 
   \leq \|\nabla \:\!\overline{\mathbf{u}}(t)\|^{\frac{1}{4}}_{H^1_\alpha(\mathbb{R}^3)} \|D^3 \:\!\overline{\mathbf{u}}(t)\|^{\frac{3}{4}}_{H^1_\alpha(\mathbb{R}^3)} \|\mathbf{u}(t)\|_{H^1_\alpha(\mathbb{R}^3)}^2,
\end{align*}
also that
\begin{displaymath}
- \left\langle \nabla \!\left( \frac{|\mathbf{u}|^2}{2} \right), \overline{\mathbf{u}} \right\rangle_{L^2}  =  \left\langle \frac{|\mathbf{u}|^2}{2}, \text{div} \,\overline{\mathbf{u}} \right\rangle_{L^2} = 0,
\end{displaymath}
and
\begin{align*}
- \alpha \langle \operatorname{rot}(\Delta \mathbf{u}) \times \mathbf{u}, \overline{\mathbf{u}} \:\!\rangle_{L^2} & = \alpha \langle \mathbf{u} \times \operatorname{rot}(\Delta \mathbf{u}), \overline{\mathbf{u}} \:\!\rangle_{L^2}  = \alpha \langle \:\! \overline{\mathbf{u}} , \mathbf{u} \times \operatorname{rot}(\Delta \mathbf{u}) \rangle_{L^2} \\ &  = \alpha \langle \:\! \overline{\mathbf{u}} \times \mathbf{u} , \operatorname{rot}(\Delta \mathbf{u}) \rangle_{L^2}   = \alpha \langle \operatorname{rot}(\overline{\mathbf{u}} \times \mathbf{u}), \Delta \mathbf{u} \rangle_{L^2}  
\\ & = \alpha \langle (\mathbf{u} \cdot \nabla) \overline{\mathbf{u}} - (\overline{\mathbf{u}} \cdot \nabla) \mathbf{u}, \Delta \mathbf{u} \rangle_{L^2}  
\\ & \leq \alpha \left(2 \|\nabla \:\!\overline{\mathbf{u}} \|_{L^\infty} \|\nabla \mathbf{u}\|_{L^2}^2 + \|D^2 \:\! \overline{\mathbf{u}}\|_{L^\infty} \|\mathbf{u}\|_{L^2} \|\nabla \mathbf{u}\|_{L^2} \right) 
\\ & \leq 2 \|\nabla \:\!\overline{\mathbf{u}} \|_{L^\infty} \|\mathbf{u}\|_{H_\alpha^{1}(\mathbb{R}^3)}^2 + \sqrt{\alpha\:\!} \:\! \|D^2 \:\! \overline{\mathbf{u}}\|_{L^\infty} \|\mathbf{u}\|_{H_\alpha^{1}(\mathbb{R}^3)}^{2}. 
\end{align*}
Since
$$
\| \nabla \:\! \overline{\mathbf{u}} \|_{L^\infty} \leq \| \nabla \:\! \overline{\mathbf{u}} \|_{L^2}^{\frac{1}{4}} \| D^3 \:\! \overline{\mathbf{u}} \|_{L^2}^{\frac{3}{4}},$$
and
$$ 
\| D^2 \:\! \overline{\mathbf{u}}  \|_{L^\infty} \leq \| D^2 \:\! \overline{\mathbf{u}}  \|_{L^2}^{\frac{1}{4}} \| D^4 \:\! \overline{\mathbf{u}}  \|_{L^2}^{\frac{3}{4}},
$$
it follows that
$$
\| \nabla \:\! \overline{\mathbf{u}}  \|_{L^\infty} \leq \| \nabla \:\! \overline{\mathbf{u}}  \|_{H^1_\alpha (\mathbb{R}^{3})}^{\frac{1}{4}} \| D^3 \:\! \overline{\mathbf{u}}  \|_{H^1_\alpha (\mathbb{R}^{3})}^{\frac{3}{4}}
$$
and
$$ 
\sqrt{\alpha\:\!} \:\! \| D^2 \:\! \overline{\mathbf{u}} \|_{L^\infty} \leq \| \nabla \:\! \overline{\mathbf{u}}  \|_{H^1_\alpha (\mathbb{R}^{3})}^{\frac{1}{4}} \| D^3 \:\! \overline{\mathbf{u}}  \|_{H^1_\alpha (\mathbb{R}^{3})}^{\frac{3}{4}}.
$$
From these estimates, it follows that
$$
- \alpha \langle \operatorname{rot}(\Delta \mathbf{u}) \times \mathbf{u}, \overline{\mathbf{u}} \rangle_{L^2} \leq 3 \| \nabla \:\! \overline{\mathbf{u}}  \|_{H^1_\alpha (\mathbb{R}^{3})}^{\frac{1}{4}} \| D^3 \:\! \overline{\mathbf{u}}  \|_{H^1_\alpha (\mathbb{R}^{3})}^{\frac{3}{4}} \| \mathbf{u} \|_{H^1_\alpha (\mathbb{R}^{3})}^{2},
$$
and therefore
$$
\frac{d}{dt} \!\left(\|\mathbf{w}(t)\|_{H^1_\alpha(\mathbb{R}^3)}^{2}\right) + 2 \mu \|\nabla \mathbf{w}(t)\|_{L^2}^2 \leq 8  \| \nabla \:\! \overline{\mathbf{u}}  \|_{H^1_\alpha (\mathbb{R}^{3})}^{\frac{1}{4}} \| D^3 \:\! \overline{\mathbf{u}}  \|_{H^1_\alpha (\mathbb{R}^{3})}^{\frac{3}{4}} \| \mathbf{u} \|_{H^1_\alpha (\mathbb{R}^{3})}^{2}.
$$
Now, notice that, due to Main Theorem \ref{Main1-Intro} and Proposition \ref{teoder128}, we obtain
$$
\frac{d}{dt} \!\left(\|\mathbf{w}(t)\|_{H^1_\alpha(\mathbb{R}^3)}^{2}\right) \leq - 2 \mu \|\nabla \mathbf{w}(t)\|_{L^2}^2 + H(t),
$$ 
where
$$
H(t) := C_1(t + C_2)^{- \min \left\{ \frac{7}{2} + \frac{3}{2} r^{*}, \frac{9}{2} + \frac{1}{2} r^{*} \right\}}.
$$
Multiplying the above inequality by $g(t)$ and working as before, we get
\[
\frac{d}{dt} \!\left[ g(t) \|\mathbf{w}(t)\|_{H^1_\alpha(\mathbb{R}^3)}^{2} \right] \leq (2\pi)^{-3} g'(t) \int_{B(t)} (1 + \alpha|\xi|^2) |\widehat{\mathbf{w}}(\xi, t)|^2 \, d\xi + g(t) H(t).
\]
From \eqref{imp_120824}, we infer
\[
\int_{B(t)} (1 + \alpha|\xi|^2) |\widehat{\mathbf{w}}(\xi, t)|^2 \,d \xi \leq C \!\left[ \rho(t)^5 + \rho(t)^7 + \rho(t)^9 \right] \!\left( \int_0^t  \| \mathbf{u} (s)\|_{H^1_\alpha (\mathbb{R}^{3})}^{2} \, ds \right)^{2}\!.
\]
Taking \( g(t) = C_2^{-m} (t + C_2)^m \), with
\[
C_2 \geq 1+ \frac{\alpha m}{2 \mu} , \quad \text{ and } \quad m > \max \!\left\{ \frac{5}{2} + \frac{3}{2}r^{*}, \frac{7}{2} + \frac{1}{2}r^{*}, \frac{5}{2} \right\}\!,
\]
it follows that
\[
\rho(t) = \left(\frac{m}{2 \mu}\right)^{\frac{1}{2}} \!(t + \widetilde{C}_2)^{- \frac{1}{2}},
\]
where \( \widetilde{C}_2 = C_2 - \displaystyle\frac{\alpha \:\!m}{2\mu} \geq 1 \). Thus,
\begin{eqnarray*}
\frac{d}{dt} \!\left[ g(t) \|\mathbf{w}(t)\|_{H^1_\alpha(\mathbb{R}^3)}^{2} \right] &\leq& C(t +C_2)^{m-\frac{7}{2}} \left( \int_0^t \| \mathbf{u} (s)\|_{H^1_\alpha (\mathbb{R}^{3})}^{2} \, ds \right)^2     \\ \noalign{\smallskip} & & +\, C(t + C_2)^{m - \min\left\{\frac{7}{2} + \frac{3}{2}r^{*}, \frac{9}{2} + \frac{1}{2}r^{*}\right\}}.
\end{eqnarray*}
Integrating with respect to time, we find
\begin{align*}
\|\mathbf{w}(t)\|_{H^1_\alpha(\mathbb{R}^3)}^{2} 
& \leq C(t + C_2)^{- \frac{5}{2}} \left(\int_0^t \| \mathbf{u} (\tau)\|_{H^1_\alpha (\mathbb{R}^{3})}^{2} \,d\tau  \right)^2 \\ & + C(t + C_2)^{- \min\left\{\frac{5}{2} + \frac{3}{2}r^*, \frac{7}{2} + \frac{1}{2}r^*\right\}}.
\end{align*}
Next, keeping  Theorem \ref{Main1-Intro} in mind, we will divide the study into three cases:\newline\newline 
\noindent (i) If $-\frac{3}{2} < r^* < -\frac{1}{2}$, then
\[
    \left(\int_0^t \| \mathbf{u} (\tau)\|_{H^1_\alpha (\mathbb{R}^{3})}^{2} \,d\tau  \right)^2 
    \leq C(t + C_2)^{-(1 + 2 r^*)}.
\]
\noindent (ii) If $r^* = -\frac{1}{2}$, then
\[
    \left(\int_0^t \| \mathbf{u} (\tau)\|_{H^1_\alpha (\mathbb{R}^{3})}^{2} \,d\tau  \right)^2 
    \leq C_1^2 \ln^2(t + C_2), \quad \text{ since } \quad C_2 \geq 1.
\]
\noindent (iii) If $r^* > -\frac{1}{2}$, then
\[
    \left(\int_0^t \| \mathbf{u} (\tau)\|_{H^1_\alpha (\mathbb{R}^{3})}^{2} \,d\tau  \right)^2 
    \leq C.
\]
Therefore:\newline\newline
\noindent (i) If $-\frac{3}{2} < r^* < -\frac{1}{2}$, we then have
\begin{eqnarray*}
 \|\mathbf{w}(t)\|_{H^1_\alpha(\mathbb{R}^3)}^{2} 
    &\leq&  C(t + C_2)^{-\frac{5}{2}}(t + C_2)^{-(1 + 2r^*)}  \\ \noalign{\smallskip} & + &  C(t + C_2)^{-\min\left\{\frac{5}{2} + \frac{3}{2}r^*, \frac{7}{2} + \frac{1}{2}r^*\right\}}    \\ \noalign{\smallskip} &\leq& C(t + C_2)^{-\left(\frac{7}{2} + 2r^*\right)} + C(t + C_2)^{-\min\left\{\frac{5}{2} + \frac{3}{2}r^*, \frac{7}{2} + \frac{1}{2}r^*\right\}} \\ \noalign{\smallskip} &\leq& C(t + C_2)^{-\min\left\{\frac{5}{2} + \frac{3}{2}r^*, \frac{7}{2} + \frac{1}{2}r^*\right\}} 
    \\ \noalign{\smallskip} &\leq& C(t + C_2)^{-\left(\frac{5}{2} + \frac{3}{2}r^*\right)}. 
\end{eqnarray*}    
    
\noindent (ii) If $r^* = -\frac{1}{2}$, we then have
\begin{eqnarray*}    
\|\mathbf{w}(t)\|_{H^1_\alpha(\mathbb{R}^3)}^{2} 
    &\leq& C(t + C_2)^{-\frac{5}{2}} \ln^2(t + C_2) + C(t + C_2)^{-\min\left\{\frac{5}{2} + \frac{3}{2}r^*, \frac{7}{2} + \frac{1}{2}r^*\right\}}  \\ \noalign{\smallskip} &\leq& C(t + C_2)^{-\frac{5}{2}} \ln^2(t + C_2) + C(t + C_2)^{-\frac{7}{4}}  \\ \noalign{\smallskip} &\leq& C(t + C_2)^{-\frac{7}{4}}  \\ \noalign{\smallskip} &=& C(t + C_2)^{-\left(\frac{5}{2} + \frac{3}{2}r^*\right)}.
\end{eqnarray*}

\noindent (iii) If $r^* > -\frac{1}{2}$, we then have
\begin{eqnarray*}     
    \|\mathbf{w}(t)\|_{H^1_\alpha(\mathbb{R}^3)}^{2} 
    &\leq& C(t + C_2)^{-\frac{5}{2}} + C(t + C_2)^{-\min\left\{\frac{5}{2} + \frac{3}{2}r^*, \frac{7}{2} + \frac{1}{2}r^*\right\}} \\ \noalign{\smallskip} &\leq& C(t + C_2)^{-\min\left\{\frac{5}{2} + \frac{3}{2}r^*, \frac{7}{2} + \frac{1}{2}r^*, \frac{5}{2}\right\}} \\ \noalign{\smallskip} &\leq& C(t + C_2)^{-\min\left\{\frac{5}{2} + \frac{3}{2}r^*, \frac{5}{2}\right\}}. 
\end{eqnarray*}
Combining all these cases, we obtain the desired result. 
\end{proof}
To finish, let us prove Corollary \ref{lower-bound}.
\begin{proof}[Proof of Corollary \ref{lower-bound}]
	We just need to observe that, by using the triangular inequality, we get 
	$$ 
		\|\mathbf{{u}}(t) \|_{L^2} \geq \|\mathbf{\bar{u}}(t)\|_{L^2} - \|\mathbf{{u}}(t) - \mathbf{\bar{u}}(t)\|_{L^2}. 
	$$
Now, we can conclude by observing that by Theorem \ref{Main1-Intro} and Theorem \ref{Main2-intro} lead to the desired lower bound only when the decay of linear part is slower than that of the difference.

\end{proof}

\end{document}